\newtheorem{theorem}{Theorem}[section]
\newtheorem{lemma}{Lemma}[section]
\newtheorem{corollary}{Corollary}[section]
\newtheorem{proposition}{Proposition}[section]
\newtheorem{definition}{Definition}[section]
\newtheorem{remark}{Remark}[section]
\newtheorem{question}{Question}[section]
\numberwithin{equation}{section}
\def\A{\mathbb A}
\def\Z{\mathbb Z}
\def\Q{\mathbb Q}
\def\R{\mathbb R}
\def\P{\mathbb P}
\def\C{\mathbb C}
\def\A{\mathbb A}
\def\a{\alpha}
\def\b{\beta}
\def\g{\gamma}
\def\l{\lambda}
\title{VARIETIES IN CAGES:  a Little Zoo of Algebraic Geometry }
\author{Gabriel Katz}
\address{MIT, Department of Mathematics, 77 Massachusetts Ave., Cambridge, MA 02139, U.S.A.}
\email{gabkatz@gmail.com}
\begin{document}

\begin{abstract}
A $d^{\{n\}}$-{\sf cage} $\mathsf K$ is the union of $n$ groups of hyperplanes in $\mathbb P^n$, each group containing $d$ members. The hyperplanes from the distinct groups are in general position, thus producing $d^n$ points where hyperplanes from all groups intersect. These points are called the {\sf nodes} of $\mathsf K$. We study the combinatorics of nodes that impose independent conditions on the varieties $X \subset \mathbb P^n$ containing them. We prove that if $X$, given by homogeneous polynomials of degrees $\leq d$, contains the points from such a special set $\mathsf A$ of nodes, then it contains all the nodes of $\mathsf K$.  Such a variety $X$ is very special: in particular, $X$ is a complete intersection.
\end{abstract}
 
\maketitle

\section{Introduction}

This paper is an extension and generalization of \cite{K}, which dealt with algebraic curves in {\sf plane cages}, to algebraic varieties in the multidimensional cages (see Definition \ref{cage}). In this text, we use the term ``variety" as a synonym of ``algebraic set".

Our tools are mostly \emph{combinatorial}. They are based on some ``Fubini's-flavored" versions of the B\'{e}sout Theorem in the spirit of \cite{GHS}, \cite{GHS1}, and \cite{GHS2}. Although the results of this paper fit well into the general framework of Cayley-Bacharach theorems (see \cite{EGH}, Theorem CB6), to apply this general machinery to the very special configurations of nodes from a given cage still requires some effort. So here we present a more direct and elementary argument, whose application is limited in scope, but geometrically transparent.     
\smallskip

Let us consider two groups of lines in the plane (real or complex), each group comprising three lines. We call such a configuration $\mathsf K$ of six lines a $3\times 3$-{\sf cage}, or $3^{\{2\}}$-{\sf cage} for short. We label the lines of the first group with \emph{red}, and of the second group with \emph{blue}. Assume that there are exactly $9$ points where the blue lines intersect the red lines. We call them the  {\sf nodes} of the cage.
\smallskip

Our original motivation for studying the varieties in cages comes from the following classical result in the theory of plane cubic curves, a special case of Chasles' Theorem for  a pair of cubic curves that meet at $9$ points \cite{Ch}.

\begin{theorem}{\bf (The Cage Theorem for Plane Cubics)}\label{3x3}  
Any plane cubic curve $\mathcal C$, passing through eight nodes of a $3\times 3$-cage, will automatically pass through the ninth node.
\end{theorem}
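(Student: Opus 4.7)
The strategy is to exhibit any such $\mathcal{C}$ as a member of the pencil spanned by the two ``trivial'' cubics $R := \ell_1 \ell_2 \ell_3$ and $B := m_1 m_2 m_3$, the products of the defining linear forms of the red and blue lines respectively. Every element $\lambda R + \mu B$ of this pencil vanishes at all nine nodes $p_{ij} := \ell_i \cap m_j$, so once $\mathcal{C}$ is written in that form its vanishing at the ninth node is automatic. By relabeling, I may assume $\mathcal{C}$ passes through the eight nodes $\{p_{ij} : (i,j) \ne (3,3)\}$ and must deduce $\mathcal{C}(p_{33}) = 0$.

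The argument proceeds by a B\'ezout dichotomy applied to the pair $(\mathcal{C}, R)$. Suppose first that some $\ell_i$ divides $\mathcal{C}$. If $\ell_i = \ell_3$, then $p_{33} \in \ell_3 \subset \mathcal{C}$ and we are done. Otherwise, say $\ell_1 \mid \mathcal{C}$, so $\mathcal{C} = \ell_1 \cdot Q$ for some conic $Q$; then $Q$ must pass through the five nodes $p_{2j}$ ($j=1,2,3$) and $p_{3k}$ ($k=1,2$). The three collinear zeros on $\ell_2$ force $\ell_2 \subset Q$ by B\'ezout, and the residual linear factor of $Q$ must contain $p_{31}$ and $p_{32}$, hence equals $\ell_3$. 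Thus $\mathcal{C} = R$, and $p_{33} \in \mathcal{C}$.

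Now suppose $\mathcal{C}$ and $R$ share no component. Then B\'ezout forces $\mathcal{C}|_{\ell_1}$ to be a nonzero cubic form on $\ell_1 \cong \P^1$ with exactly the three simple zeros $p_{11}, p_{12}, p_{13}$, matching the zeros of $B|_{\ell_1}$. So $\mathcal{C}|_{\ell_1} = c \cdot B|_{\ell_1}$ for some scalar $c$, which lifts to a global factorization
\[
\mathcal{C} - c B = \ell_1 \cdot Q
\]
for some conic $Q$. Evaluating at the remaining five known nodes (all off $\ell_1$, and all zeros of both $B$ and $\mathcal{C}$) shows that $Q$ vanishes at $p_{21}, p_{22}, p_{23}, p_{31}, p_{32}$; the ``three-collinear-zeros'' argument then forces $Q = \ell_2 \ell_3$. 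Therefore $\mathcal{C} = c B + R$, and since $B(p_{33}) = R(p_{33}) = 0$, so does $\mathcal{C}(p_{33})$.

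The most delicate point is the passage from the pointwise proportionality of $\mathcal{C}|_{\ell_1}$ and $B|_{\ell_1}$ to the global identity $\mathcal{C} - cB = \ell_1 \cdot Q$; this rests on $\ell_1$ generating a prime ideal in the homogeneous coordinate ring of $\P^2$. Once that is in hand, the whole argument uses nothing beyond B\'ezout for plane curves of degree at most three, in keeping with the elementary, combinatorial philosophy emphasized in the introduction.
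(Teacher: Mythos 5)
Your proof is correct, and it is essentially the specialization to $n=2$, $d=3$ of the argument the paper uses for Theorem \ref{main_th} (the paper itself defers the plane case to \cite{K}): restrict to a line of one color, match the restriction with $B$, divide out the linear form, and iterate on the residual conic until $\mathcal C$ is exhibited as a member of the pencil $\lambda R + \mu B$. The only cosmetic quibbles are that your two cases can be merged (taking $c=0$ when $\ell_1 \mid \mathcal C$) and that $Q$ is only proportional to $\ell_2\ell_3$, neither of which affects the conclusion.
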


Recall that Cage Theorem \ref{3x3} reflects the {\sf associativity} of the binary group operation ``$+$" on an elliptic curve $\mathcal C$ (see Figure \ref{asso}). 

\begin{figure}
  \includegraphics[width=0.5\textwidth]{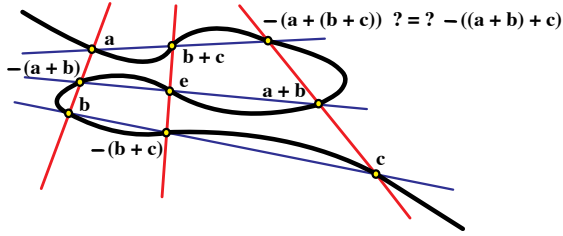}
  \caption{\small{Cage Theorem \ref{3x3} helps to validate the associativity of the group operation on a nonsingular cubic curve; $e$ denotes the neutral element.}}
  \label{asso}
\end{figure}

\begin{definition}\label{cage}
 A $d^{\{n\}}$-{\sf cage} $\mathsf K$ is a configuration of $n$ distinctly colored groups of $d$ hyperplanes each (the entire hyperplane configuration $\mathsf K$ consists of $nd$ hyperplanes) located in the $n$-space (projective or affine) in such a way, that $\mathsf K$ generates exactly $d^n$ points where the hyperplanes of all $n$ distinct colors $\a_1, \dots, \a_n$ intersect transversally. (It follows that any $n$-tuple of distinctly colored hyperplanes are in general position in the ambient $n$-space.) These points are called the {\sf nodes} of the cage. \hfill $\diamondsuit$
\end{definition}

Hyperplanes in $\P^n$ form a dual projective space $\P^{n \ast}$, the space of linear homogeneous functions, considered up to proportionality. Therefore $d$ hyperplanes of the same color from a $d^{\{n\}}$-cage $\mathsf K \subset \P^n$ represent an \emph{unordered} configuration of $d$ points in $\P^{n \ast}$, a point in the symmetric product $\mathsf{Sym}^d(\P^{n \ast})$. So the color-ordered collection of $n$ such points from $\mathsf{Sym}^d(\P^{n \ast})$ is a point of the space $\big(\mathsf{Sym}^d(\P^{n \ast})\big)^n$. By Definition \ref{cage}, any set of $n$ hyperplanes of \emph{distinct} colors has a single intersection point. The requirement that some set of $n$ hyperplanes of distinct colors has multiple intersection points in $\P^n$ puts algebraic constraints on the coefficients of the $dn$ homogeneous linear polynomials (in $n+1$ variables) that define the hyperplanes. Similarly, the requirement in Definition \ref{cage} that all transversal $n$-colored intersections are distinct, and thus numbering $d^n$, produces a Zariski open set. Therefore, we get:

\begin{lemma}\label{space_of_cages} The $d^{\{n\}}$-cages form a Zariski open set $\mathcal K$ in the $(dn^2)$-dimensional space $\big(\mathsf{Sym}^d(\P^{n \ast})\big)^n$. The group of projective transformations $\mathsf{PGL}_\A(n+1)$ acts naturally on $\big(\mathsf{Sym}^d(\P^{n \ast})\big)^n$, and thus on the set $\mathcal K$. \hfill $\diamondsuit$
\end{lemma}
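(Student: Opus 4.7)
The plan is to establish three things in turn: the ambient dimension count, the Zariski openness of the cage locus $\mathcal K$, and the naturality of the $\mathsf{PGL}_\A(n+1)$-action. The dimension is immediate: $\P^{n\ast}$ is irreducible of dimension $n$, so the quotient $\mathsf{Sym}^d(\P^{n\ast}) = (\P^{n\ast})^d/S_d$ is irreducible of dimension $nd$, and the $n$-fold Cartesian product therefore has dimension $n\cdot(nd) = dn^2$.

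For the openness claim, I would first pass to the ordered cover $\pi : (\P^{n\ast})^{dn} \to \big(\mathsf{Sym}^d(\P^{n\ast})\big)^n$ obtained by labelling hyperplanes within each color class; it is enough to show that the preimage $\pi^{-1}(\mathcal K)$ is defined by finitely many open conditions, since $\pi$ is a finite surjective morphism, so the $S_d^n$-invariant complement of $\pi^{-1}(\mathcal K)$ descends to a closed subset downstairs. Upstairs, the defining conditions of Definition \ref{cage} unpack into three families of open constraints on the coefficients of the $dn$ linear forms: (i) within each color class, the $d$ hyperplanes must be distinct (nonvanishing of finitely many $2\times(n+1)$ minors); (ii) each choice of one hyperplane per color must intersect in a single point, i.e.\ the corresponding $n\times(n+1)$ matrix of linear coefficients must have maximal rank $n$ (nonvanishing of at least one $n\times n$ minor); and (iii) the $d^n$ intersection points produced in (ii) must be pairwise distinct (nonvanishing of certain polynomial expressions in the coefficients, derived from Cramer's rule and computed for each unordered pair of $n$-tuples). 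Each family consists of finitely many open conditions, and their intersection is precisely $\pi^{-1}(\mathcal K)$.

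For the action, the group $\mathsf{PGL}_\A(n+1)$ acts on $\P^n$ and, via the contragredient representation on linear forms, on the dual projective space $\P^{n\ast}$. This induces a natural diagonal action on each symmetric power $\mathsf{Sym}^d(\P^{n\ast})$ and hence on the $n$-fold product. Since projective transformations are isomorphisms of $\P^n$, they preserve being a hyperplane, transversality of an $n$-tuple of hyperplanes, and distinctness of intersection points; all three open conditions above are therefore preserved, and the action restricts to $\mathcal K$.

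I expect the only mild subtlety to be the descent bookkeeping in step (ii), namely verifying that conditions written down on the ordered cover $(\P^{n\ast})^{dn}$ are indeed $S_d^n$-invariant so that their common zero locus descends to a genuine closed subset of $\big(\mathsf{Sym}^d(\P^{n\ast})\big)^n$; this is automatic once the conditions are phrased in terms of the unordered collection of nodes, but it deserves an explicit remark. The rest of the proof is a direct translation of linear-algebraic nondegeneracy conditions into the nonvanishing of explicit polynomials.
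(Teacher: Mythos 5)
Your proposal is correct and follows essentially the same route as the paper, which likewise works with the coefficients of the $dn$ linear forms (i.e., on the ordered cover) and observes that the failure of transversality or of distinctness of the $d^n$ intersection points is an algebraic (closed) condition, while the $\mathsf{PGL}_\A(n+1)$-action is the evident one on $\big(\mathsf{Sym}^d(\P^{n\ast})\big)^n$. Your added care about the $S_d^n$-descent and the explicit minors only makes the paper's terse argument more precise.
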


The problem we address in this paper is to describe the varieties that contain all $d^n$ nodes of a given cage $\mathsf K$. It turns out, that every variety $V$, defined by polynomials of degrees $\leq d$ and containing the node set $\mathsf N$,  is very special indeed. In particular, $V$ must be a {\sf complete intersection} (see Definition \ref{complete_int}) of the type $(\underbrace{d, \dots , d}_{s})$, where $s = n- \dim V$. Furthermore, the requirements that a hypersurface of degree $\leq d$ will pass through the nodes of a $d^{\{n\}}$-cage are very much \emph{redundant}. In this article, we describe the combinatorics of the nodes that impose \emph{independent} constraints on the hypersurface in question. We call such maximal set $\mathsf A$ of ``independent" nodes {\sf supra-simplicial} (see Definition \ref{def2.1} and Figure \ref{supra}). Crudely, the proportion of cardinalities $\frac{\#\mathsf A}{\# \mathsf N}$ declines as $\sim 1/n!$ with the growth of $d$.  
\smallskip

We have mentioned already that some of our results share the flavor with a remarkable family of classical theorems of Algebraic Geometry (see \cite{C}, \cite{C1}, \cite{DGO},  \cite{EGH}, and Theorem \ref{Bach} below). These classical theorems operate within a much less restrictive environment than the one of the $d^{\{n\}}$-cages. However, the theorems about varieties in cages are more geometrical, transparent, and easy to state. 

To provide a point of reference, let us describe briefly this family of classical results, known under the name ``{\sf Cayley-Bacharach theorems}".

Let $\Z_+$ denote non-negative integers. Recall that the {\sf Hilbert functions} $h_X: \Z_+ \to \Z_+$ of a variety $X$ over a field $\A$ associates with a non-negative integer $k$ the dimension of the $k$-graded portion of the quotient ring $\A[x_0, \dots, x_N] / \mathcal I_X$, where $\mathcal I_X$ denotes the zero ideal  that defines $X$.  The ring $\A[x_0, \dots, x_N] / \mathcal I_X$ is called the {\sf coordinate ring} of $X$.\smallskip

Since the node set $\mathsf N$ of a $(d\times d)$-cage is the intersection locus of $d$ red and $d$ blue lines, Theorem \ref{3x3} is a special case of the Cayley-Bacharach Theorem (\cite{C}, \cite{B}), stated below.  For a complete intersection $X\subset \P^2$, Theorem \ref{Bach} connects the Hilbert functions $h_X: \Z_+ \to \Z_+$,  $h_{X_1}: \Z_+ \to \Z_+$, and $h_{X_2}: \Z_+ \to \Z_+$ of a finite set $X$, its subset $X_1$, and its complement $X_2:= X \setminus X_1$. Recall that, for a $0$-dimensional variety $X$ and all sufficiently big $k$, $h_X(k) = |X|$, the cardinality of $X$. 

\begin{theorem}{\bf (Cayley-Bacharach)}\label{Bach}
Let $\mathcal D$ and $\mathcal E$ be two projective plane curves of degrees $d$ and $e$, respectively, and let the finite set $X = \mathcal D \cap \mathcal E$ be a complete intersection in $\P^2$. Assume that $X$ is the disjoint union of two subsets, $X_1$ and $X_2$. Then for any $k \leq d+e-3$, the Hilbert functions  $h_X$, $h_{X_1}$, and $h_{X_2}$ are related by the formula:
$$h_X(k) - h_{X_1}(k) = |X_2| - h_{X_2}((d + e - 3) - k).  \qquad \qquad \qquad \qquad \hfill \diamondsuit $$ 
\end{theorem}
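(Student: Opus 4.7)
My plan is to prove Theorem \ref{Bach} by graded Gorenstein duality applied to the complete intersection coordinate ring $S_X := R/I_X$, where $R = \A[x_0,x_1,x_2]$ and $I_X = (F, G)$ with $\deg F = d$, $\deg G = e$. The structural input is that $S_X$ is Cohen--Macaulay and Gorenstein with canonical module $\omega_{S_X} \cong S_X(d+e-3)$; this shift (read off from the Koszul resolution of $S_X$, together with $\omega_R = R(-3)$ on $\P^2$) is precisely what produces the exponent $d+e-3$ in the theorem.

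I would proceed in three steps. \emph{Translation.} Since $X$ is reduced and zero-dimensional, the short exact sequence $0 \to I_{X_j}/I_X \to S_X \to R/I_{X_j} \to 0$ is exact in every graded degree, so $\dim(I_{X_1}/I_X)_k = h_X(k) - h_{X_1}(k)$; this rewrites the left-hand side of the theorem as $\dim(I_{X_1}/I_X)_k$. \emph{Linkage.} Because $X_1$ and $X_2$ are disjoint and $I_X$ is unmixed, I would verify the colon-ideal identity $(I_X : I_{X_2}) = I_{X_1}$ between saturated homogeneous ideals, and then deduce
$$I_{X_1}/I_X \;\cong\; \mathrm{Hom}_{S_X}\bigl(R/I_{X_2},\, S_X\bigr)$$
by sending $s \in (I_{X_1}/I_X)_k$ to the degree-$k$ map $1 \mapsto s$. \emph{Duality.} Substituting $S_X \cong \omega_{S_X}(-d-e+3)$ converts the right-hand side of the above Hom into a graded piece of $\mathrm{Hom}_{S_X}(R/I_{X_2}, \omega_{S_X}) \cong \omega_{R/I_{X_2}}$, the canonical module of the $0$-dimensional reduced scheme $X_2$. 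The Hilbert function of this canonical module is classical: $\dim(\omega_{R/I_{X_2}})_{-m} = |X_2| - h_{X_2}(m)$, since $\omega_{R/I_{X_2}}$ is the graded Matlis dual of the local cohomology $H^1_{\mathfrak m}(R/I_{X_2})$ measuring the saturation defect of $h_{X_2}$. Matching degrees yields $m = d+e-3-k$ and the asserted equality.

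The main obstacle is careful bookkeeping of the shift $d+e-3$ through the duality (the $-3$ from $\omega_R = R(-3)$ and the $d+e$ from the Koszul twist), together with verifying the colon identity $(I_X : I_{X_2}) = I_{X_1}$ as \emph{saturated} homogeneous ideals rather than only as sheaves of ideals. Both points rest essentially on $X = \mathcal D \cap \mathcal E$ being a genuine complete intersection: Cohen--Macaulayness of $S_X$ kills the embedded primes that would spoil the colon identity, and the Koszul resolution is what pins down the twist in $\omega_{S_X}$. Without the completeness of the intersection, the Gorenstein symmetry collapses and no clean duality of Hilbert functions survives.
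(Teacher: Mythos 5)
The paper does not prove Theorem \ref{Bach} at all: it is quoted as a classical result with references to Cayley, Bacharach, and the survey \cite{EGH}, and is used only as background for the cage theorems, whose proofs in Section 2 are elementary and combinatorial (slicing by hyperplanes and induction on $n$), with no homological machinery. So there is nothing in the paper to compare your argument against line by line; what you have written is essentially the standard modern proof from \cite{EGH} (their ``CB7/CB8'' formulation), via liaison and graded Gorenstein duality, and it is sound. Your three steps are the right ones, and your bookkeeping of the twist is correct: the Koszul resolution $0 \to R(-d-e) \to R(-d)\oplus R(-e) \to R \to S_X \to 0$ together with $\omega_R = R(-3)$ gives $\omega_{S_X} \cong S_X(d+e-3)$, and the identification $(0:_{S_X} I_{X_2}) = (I_X : I_{X_2})/I_X = I_{X_1}/I_X$ is exactly the colon identity you flag. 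Two small points deserve explicit mention if you write this up in full. First, the duality step $\mathrm{Hom}_{S_X}(R/I_{X_2}, \omega_{S_X}) \cong \omega_{R/I_{X_2}}$ requires $R/I_{X_2}$ to be a maximal Cohen--Macaulay $S_X$-module; this holds because $X_2$ is a nonempty reduced finite set, so its saturated coordinate ring has depth $1 = \dim S_X$ (and the degenerate case $X_2 = \emptyset$ makes the theorem trivially true, since then $X_1 = X$ and both sides vanish). Second, for the colon identity it suffices to check it locally at each point of $X$ (where reducedness of $X = X_1 \sqcup X_2$ makes it immediate) and then observe that both $(I_X : I_{X_2})$ and $I_{X_1}$ are saturated --- $(F,G)$ is saturated because $S_X$ has positive depth, and a colon of a saturated ideal is saturated --- so agreement of sheaves forces agreement of ideals, as you indicate. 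With those details filled in, the proof is complete and is, if anything, more robust than the classical enumerative arguments: it works verbatim for non-reduced complete intersections once Hilbert functions are interpreted scheme-theoretically.
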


The RHS of this formula describes the failure to impose independent constrains by the points of the set $X_2$ on the polynomials of degree $k$. So Cayley-Bacharach Theorem may be viewed as a duality claim; however, it does not compute explicitly each of the dual quantities in the formula above. In contrast, our Theorem \ref{main_th} accomplishes this task, provided $X = X_1 \coprod X_2$ are very special: namely, $X$ is the $0$-dimensional variety of nodes of a cage $\mathsf K$, and $X_1 \subset X$ is a supra-simplicial set. \smallskip    

Theorem \ref{Bach} admits a comprehensive generalization by Davis, Geramita, Orecchia \cite{DGO}, and by Geramita, Harita, Shin (see \cite{GHS1}, and especially \cite{GHS2}, Theorem 3.13). It is a ``Fubini-type" theorem for the Hilbert function of a finite subset $X \subset \P^n$ that is contained in the union of a family of hypersurfaces $\{H_i \}_{1 \leq i \leq s}$, whose degrees $\{d_i\}$ add up to the degree of $X$. Under some subtle hypotheses that regulate the interaction between $X$ and the hypersurfaces $\{H_i \}_{1 \leq i \leq s}$ (they include the hypotheses ``$X = \coprod_i (X \cap H_i)"$), a nice formula for the Hilbert functions $\{h_{X \cap H_i}: \Z_+ \to \Z\}_{1 \leq i \leq s}$ of $H_i$-slices of $X$ emerges:
$$h_X(k) = h_{X \cap H_1}(k) + h_{X\cap H_2}(k - d_1)+ \dots +h_{X\cap H_s}(k- (d_1 + \dots +d_{s-1})).$$
A clear beautiful overview of the research, centered on the Cayley-Bacharach type theorems, can be found in \cite{EGH}.
\smallskip

Now let us describe \emph{the results of the paper and its structure} in some detail.  The paper is divided in two sections, including the Introduction. \smallskip

Our main results of are: Theorem \ref{main_th}, Theorem \ref{smooth}, and Corollary \ref{unique_hyper}. Here is a summary of their claims. Any variety $X \subset \P^n$ that is the zero set of homogeneous polynomials of degrees $\leq d$ and contains a supra-simplicial set $\mathsf A$ of nodes of a given $d^{\{n\}}$-cage $\mathsf K  \subset \P^n$ contains all the nodes of  $\mathsf K$. Such $X$ is a complete intersection of the multi-degree $(\underbrace{d, \dots , d}_{s})$, where $s = \textup{codim}(X, \P^n)$. Moreover, $X$ is smooth in the vicinity of the node set $\mathsf N$. The variety $X$ is completely determined by $\mathsf A$ and the tangent to $X$ space $\tau_p$ at any of the nodes $p$. Conversely, any subspace $\tau_p \subset T_p(\P^n)$ of codimension $s$, where $p \in \mathsf N$, with the help of $\mathsf A$, produces such a variety $X$.\smallskip

In all the figures, we restrict ourselves to depictions of cages in the space $\R^3$. Most of the figures are produced with the help of the \emph{Graphing Calculator} application. In the figures, for technical reason, the nodes of the cages are invisible. Although the images depict real surfaces in only $3^{\{3\}}$- and $4^{\{3\}}$-cages, the entire exhibition looks surprisingly rich.  \smallskip

We tried to make this text friendly to readers who, as the author himself, are not practitioners of Algebraic Geometry but who may enjoy a visit to the small zoo of varieties in cages, a microcosmos of the old fashion Italian style Algebraic Geometry.

\section{A Multidimensional Zoo}
As a default, we choose the base field $\A$ to be the field of real or complex numbers. 
In the notations below, we do not emphasize the dependence of our constructions on the choice of a base field.\smallskip

Let $\mathcal L_j$ be a degree $d$ homogeneous polynomial whose zero set is the union of $d$ hyperplanes of a particular color $\a_j$ ($\mathcal L_j$ is a product of $d$ linear forms).  Since $\deg(\mathcal L_j) = d$, B\'{e}zout's Theorem implies that the solution set $\mathsf N$ of the system $\{\mathcal L_j =0\}_{j \in [1, n]}$ consists of $d^n$ points at most, provided that  $\mathsf N$ is finite. Thus Definition \ref{cage} implies that each node $p \in \mathsf N$ of the cage belongs to a single hyperplane of a given color and the hyperplanes of distinct colors are in general position at $p$, and thus in the ambient $n$-space.  It follows that the node locus $\mathsf N \subset \P^n$ is a $0$-dimensional complete intersection of degree $d^n$.
\smallskip

\noindent{\bf Example 2.1.} Consider the complex Fermat curve $\mathcal F \subset \C\P^2$, given by the equation $\{\tilde x^d + \tilde y^d = \tilde z^d\}$ in the homogeneous coordinates $[\tilde x : \tilde y : \tilde z]$. In the affine coordinates $(x, y) = (\tilde x/\tilde z, \tilde y/\tilde z)$, its equation may be written as $x^d + y^d = 1$, or as $\prod_\xi (x - \xi) + \prod_\eta (y- \eta) = 0$, where $\xi, \eta$ run over the set of complex $d$-roots  $\{\sqrt[d]{-1/2}\}$. Therefore $\mathcal F$ passes trough the nodes of the $d \times d$-cage $\mathsf K := (\bigcup_\xi \{x = \xi\})\,\bigcup  \,(\bigcup_\eta \{y = \eta\}) \subset \C^2$.
\hfill $\diamondsuit$
\smallskip 

Let $\mathbf I^n(d)$ be the subset $\{I = (i_1, i_2, \dots,  i_n)\}$ of the lattice $\Z^n_+$, such that each $i_j \in [1, d]$. So $\mathbf I^n(d)$ is a $n$-dimensional ``cube" of the size $d$. By definition, $\| I\| = \sum_{j =1}^n i_j$.\smallskip

If we introduce \emph{some order} among the hyperplanes of the same color $\a_j$ ($j = 1, \dots , n$), then each node $p_I$ of $\mathsf K$ will be marked with a unique multi-index $I \in  \mathbf I^n(d)$.

\begin{definition}\label{def2.1} A set of nodes $\mathsf T$ from $d^{\{n\}}$-cage $\mathsf K$ is called {\sf simplicial} if, with respect to some orderings of the hyperplanes in each group, it is comprised of the nodes $\{p_I\}_{I \in \mathbf I^n(d)}$, subject to the constraints $\| I\|  \leq d + 1$. \smallskip

A set of nodes $\mathsf A$ from a cage $\mathsf K$ is called {\sf supra-simplicial} if, with respect to some orderings of the hyperplanes in each group, it is comprised of the nodes $\{p_I\}_{I \in \mathbf I^n(d)}$, subject to the constraints $\| I\|  \leq d + 2$. (see Figure \ref{supra}, where the grid corner is located at $(1, 1, 1)$). \hfill $\diamondsuit$
\end{definition}

\begin{figure}
  \includegraphics[width=0.45\textwidth]{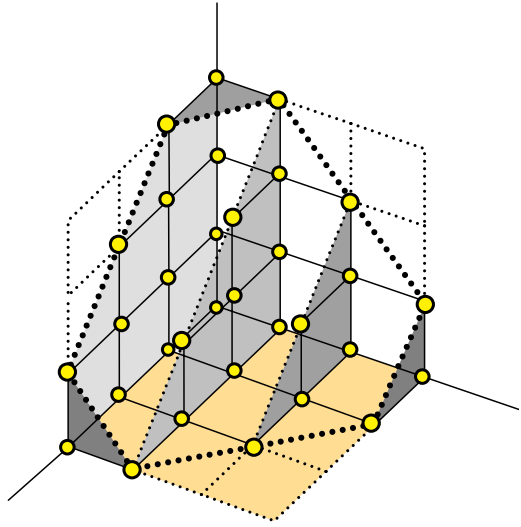}
  \caption{\small{A supra-simplicial set of nodes in a $4^{\{3\}}$-cage}}
  \label{supra}
\end{figure}

\noindent{\bf Example 2.2.} For $d =2$, the $2^{\{n\}}$-cage is modeled after the union of the hyperplanes in $\R^n$ that extend the faces of a $n$-cube. The cardinality of the node locus $\mathsf N$ is $2^n$, the cardinality of the simplicial set $\mathsf T$ is $n+1$, while the cardinality of the supra-simplicial set $\mathsf A$ is $C_n^2 + n +1 = \frac{1}{2}(n^2 + n +2)$.
\hfill $\diamondsuit$
\smallskip

\noindent{\bf Example 2.3.} A famous example of a $K3$-surface is given by the equation $\{y_0^4 + y_1^4+ y_2^4 + y_3^4= 0\}$ in $\C\P^3$, or by the equation $\{x_1^4 + x_2^4+ x_3^4 +1 = 0\}$ in $\C^3$. Using the partition  $\{1 = 1/3 +1/3 +1/3\}$, the latter equation may be written in the form $$\prod_\a(x_1- \a)+ \prod_\b(x_2- \b) + \prod_\g(x_3 -\g) = 0,$$ where $\a, \b, \g$ each runs over the four complex roots of the equation $\{z^4 = -1/3\}$. Therefore, the $K3$-surface contains all the $64$ nodes of a $4^{\{3\}}$-cage $\mathsf K$, defined by the equations $$\big\{\prod_\a(y_1- \a \cdot y_0) =0\big\} \bigcup \big\{\prod_\b(y_2- \b\cdot  y_0) =0\big\} \bigcup \big\{\prod_\g(y_3 -\g\cdot  y_0) = 0\big\}.$$

\noindent In fact, the $K3$-surface is nailed to the notes of a 2-dimensional variety of cages, produced in similar ways by writing down $1$ as a sum of three complex numbers, all different from $0$. The previous construction was based on the composition  $\{1 = 1/3 +1/3 +1/3\}$. 

We notice that the nodes of this cage $\mathsf K$ are ``invisible" in $\R\P^3$. 
\smallskip

The permutation group $\mathsf S_4$ of order $24$ acts on $\C\P^3$ by permuting the coordinates $(y_0,  y_1, y_2, y_3)$. Under this $\mathsf S_4$-action, this surface is invariant. In contrast, the cage $\mathsf K$ is invariant only under the $\mathsf S_3$-action that permutes the coordinates $(y_1, y_2, y_3)$. (This action does not preserve the colors of the cage!) Thus, using the $\mathsf S_4$-action on $\mathsf K$, the $K3$-surface contains the nodes of at least \emph{four} distinct $4^{\{3\}}$-cages in $\C\P^3$.
\hfill $\diamondsuit$
\smallskip

\noindent{\bf Example 2.4.} Recall a remarkable Cayley-Salmon Theorem \cite{C1}: any smooth complex cubic surface $X$ contains exactly $27$ lines. If $X \subset \C\P^3$ is given by the equation $\{z_0^3 + z_1^3 + z_2^3 + z_3^3  = 0\}$ (this surface is called Fermat cubic surface), then putting $\omega := e^{2\pi \mathbf i/3}$, each of these $27$ lines is given by $2$ linear constraints (see \cite{M}, Corollary (8.20)):
\begin{eqnarray}
\{z_0 + \omega^i z_1 = 0, \; z_2 + \omega^j z_3 =0\}, \;\; i, j \in [0, 2], \nonumber \\
\{z_0 + \omega^i z_2 = 0, \;  z_1 + \omega^j z_3 =0\}, \;\; i, j \in [0, 2], \nonumber \\
\{z_0 + \omega^i z_3 = 0, \;  z_1 + \omega^j z_2 =0\},  \;\; i, j \in [0, 2].
\end{eqnarray}

As in the previous examples, using the composition $\{1 = 1/3 +1/3+ 1/3\}$, we notice that $X$ is inscribed in a $3^{\{3\}}$-cage $\mathsf K$, given by the formula 
\hfill \break $$\bigcup_{j =1}^3  \Big\{\prod_{k=0}^2 \big(z_j +  \frac{1}{\sqrt[3]{3}}\, \omega^k \, z_0\big) = 0\Big\}.$$

As in Example 2.3, there exists a $2$-parameter family of cages in which $X$ is inscribed (it corresponds to different ways one can represent $1$ as a sum of three non-vanishing complex numbers). 
\smallskip

The symmetric group $\mathsf S_4$ acts on the Fermat surface $X$ by permuting the coordinates in $\C\P^3$. This action must preserve the configuration of $27$ lines in $X$ since these lines are the only ones residing in $X$.
The subgroup $\mathsf S_3 \subset \mathsf S_4$ that permutes the coordinates $(z_1, z_2, z_3)$ evidently preserves the cage $\mathsf K$, but not its colors. Thus $X$ contains the nodes of at least $4$ distinct cages in $\C\P^3$, obtained from $\mathsf K$ by the $\mathsf S_4$-action. 
\smallskip

Consider the $27$ lines, contained the $3^{\{3\}}$-cage $\mathsf K$, where two planes of distinct colors intersect (this locus is the ``$1$-skeleton" of $\mathsf K$), and compare them with the $27$ lines on a smooth cubic surface $X$ (see \cite{H}, Chapter V, Section 4, for the explicit description of the configuration the $27$ lines on $X$).
\hfill $\diamondsuit$

\begin{question} \emph{For a smooth complex cubic surface $X \subset \C\P^3$ that contains all the nodes of a given $3^{\{3\}}$-cage $\mathsf K$, how to describe in terms of $\mathsf K$ the pattern of $27$ lines that belong to $X$? Is there anything special about the locus where the $27$ lines in $X$ hit the nine planes that form the cage?} \smallskip

\emph{Perhaps, within the family of cubic surfaces $X$ that are inscribed in $\mathsf K$, the $27$ bicolored lines of the cage are ``the limits" of $27$ lines on $X$, as $X$ degenerates  into the completely reducible variety of $3$ planes of a particular color?}
 \hfill $\diamondsuit$
\end{question}

\begin{figure}
  \includegraphics[width=0.6\textwidth]{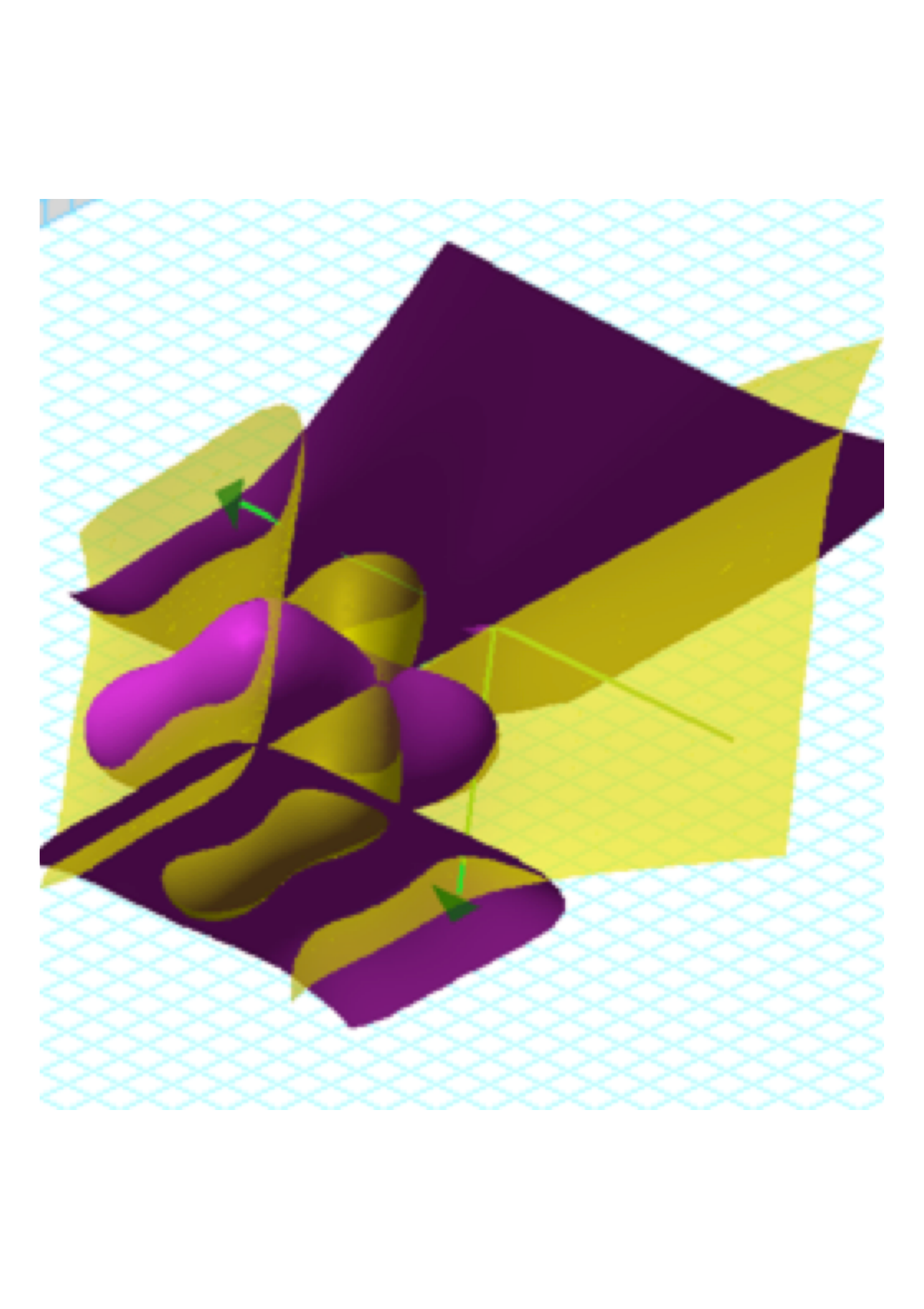}
  \caption{\small{Two cubic surfaces, each passing through the nodes of a $3^{\{3\}}$-cage $\mathsf K \subset \R^3$. Note the curve $C \subset \R^3$ of the multi-degree $(3,3)$, where the two surfaces (of two colors) intersect. $C$ also contains all the $27$ nodes of $\mathsf K$.}}
  \label{3X3}
 \end{figure}

\begin{figure}
  \includegraphics[width=0.6\textwidth]{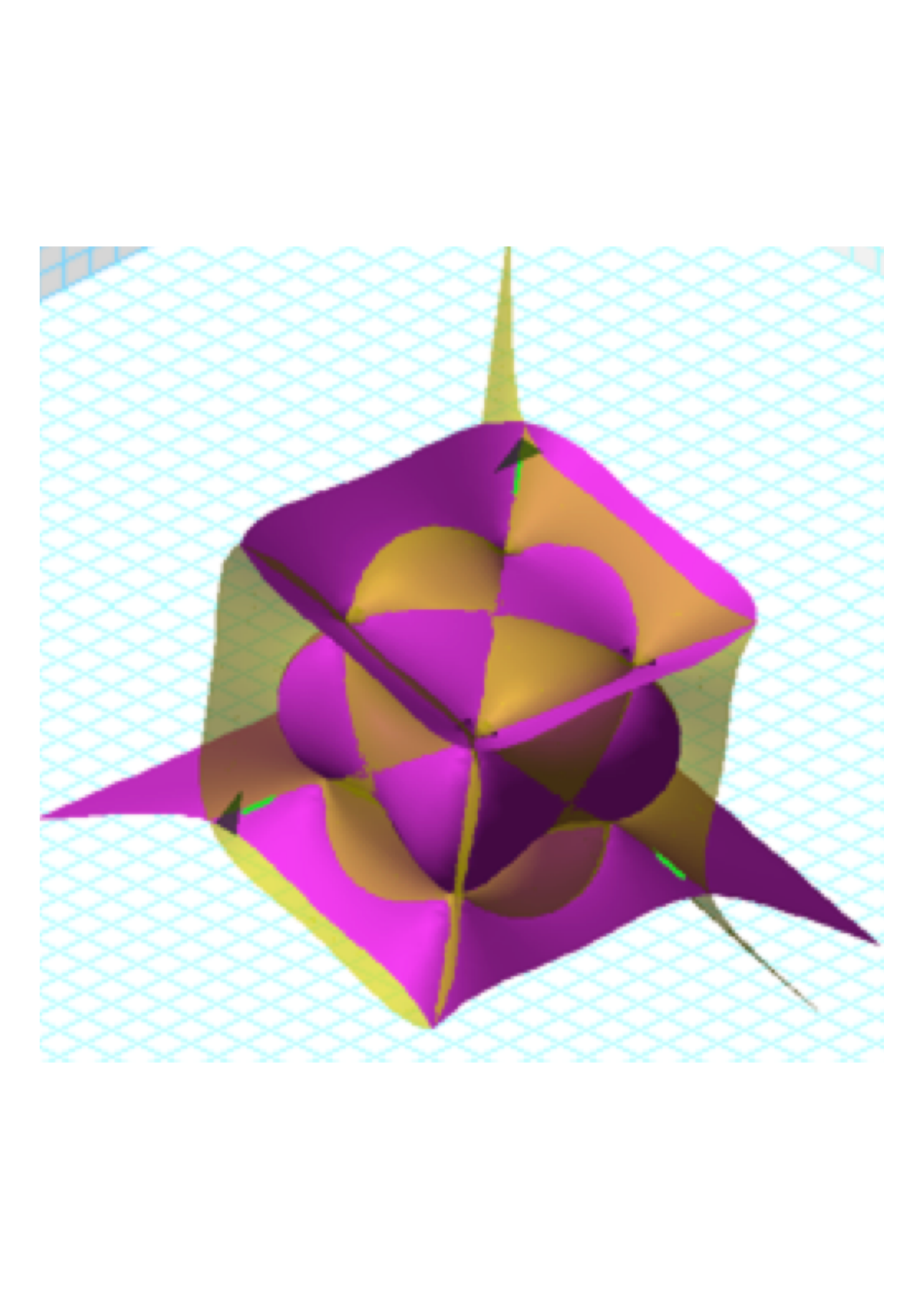}
  \caption{\small{Another pair of cubic surfaces, passing trough the $27$ nodes of the same cage $\mathsf K$, as in Figure \ref{3X3}. Again, the ``bicolored" intersection locus $C$ of the two surfaces contains all $27$ nodes of $\mathsf K$.}}
  \label{3X3A}
\end{figure}

\begin{figure}
  \includegraphics[width=0.7\textwidth]{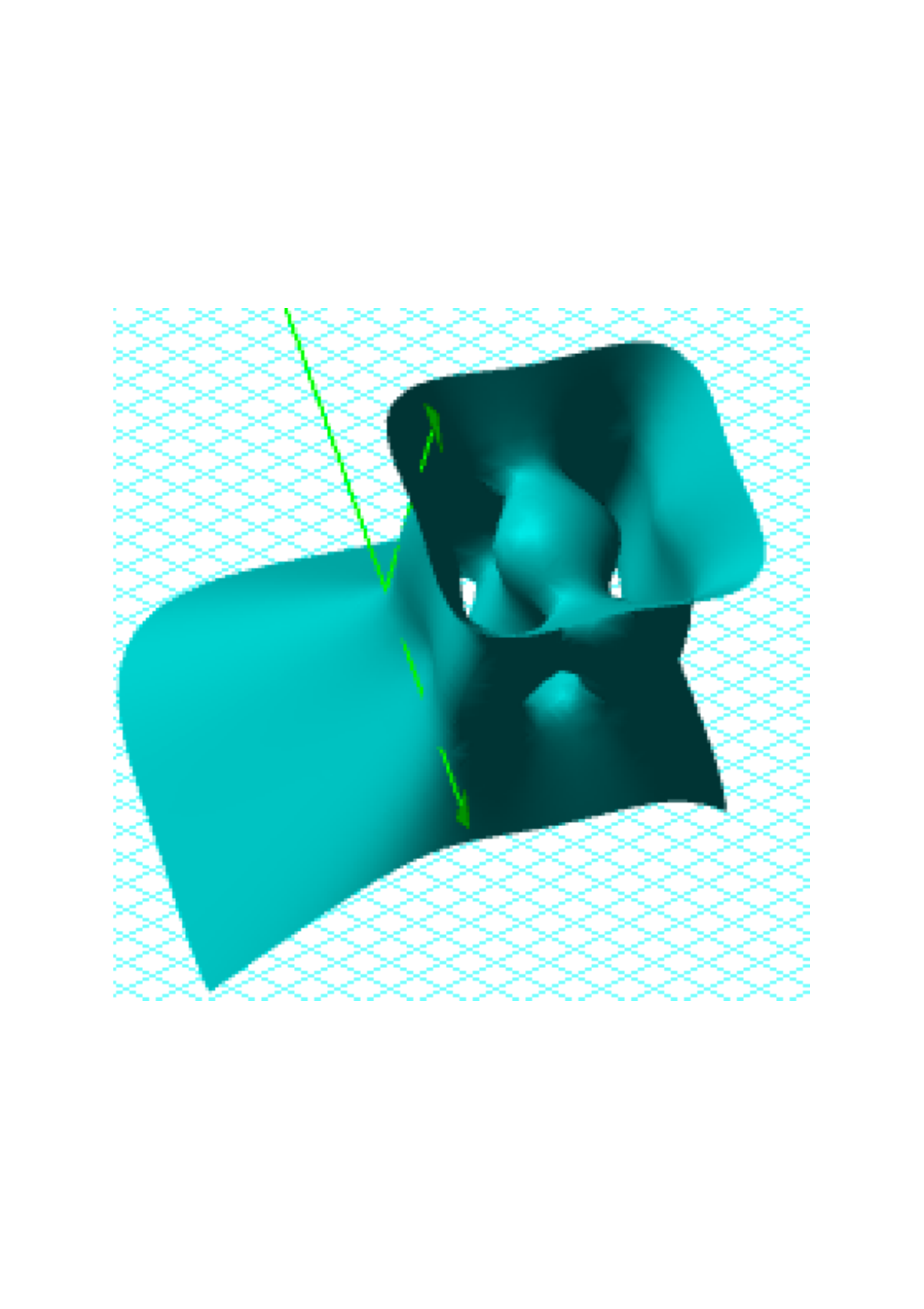}
  \caption{\small{A surface of degree $4$, passing trough $64$ nodes of a $\mathsf S_3$-symmetric $4^{\{3\}}$-cage $\mathsf K \subset \R^3$. Although this surface is not compact in $\R^3$, there are compact real surfaces of degree $4$ that pass trough the nodes of $\mathsf K$.}}
  \label{4X4A}
\end{figure}

\smallskip

By examining the diagonal lines in the Pascal Triangle, we get the following useful combinatorial fact.

\begin{lemma}\label{cardinality} Each simplicial set of nodes $\mathsf T$ in a $d^{\{n\}}$-cage is of the cardinality $C_{d+n-1}^n$.

Each supra-simplicial set of nodes $\mathsf A$ in a $d^{\{n\}}$-cage is of cardinality $C_{d+n}^n -  n$. 
\hfill $\diamondsuit$
\end{lemma}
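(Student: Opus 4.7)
The plan is to pass to the shifted multi-index $J := I - (1, \dots, 1) \in \Z_+^n$, so that $J$ runs over the cube $\{j_k \in [0, d-1]\}$ and $\|I\| = \|J\| + n$. Under this change of variable the simplicial and supra-simplicial constraints become $\|J\| \leq d-1$ and $\|J\| \leq d$, respectively, and both counts reduce to standard stars-and-bars problems for non-negative integer vectors lying under a hyperplane. This is exactly the ``diagonal of the Pascal triangle'' interpretation that the lemma's preamble alludes to.

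For $\#\mathsf T$ I would exploit the fact that when $\sum_k j_k \leq d-1$ each individual $j_k \leq d-1$ is automatic, so the cube ceiling is invisible. Adjoining a slack variable $j_{n+1} := (d-1) - \sum_{k=1}^n j_k \geq 0$ converts the inequality into the equation $\sum_{k=1}^{n+1} j_k = d-1$, yielding $\binom{d-1+n}{n} = C_{d+n-1}^n$ solutions. Equivalently, slicing by $s = \|J\|$, one has $\#\{J : \|J\| = s\} = \binom{s+n-1}{n-1}$, and the hockey-stick identity
\[
  \sum_{s=0}^{d-1}\binom{s+n-1}{n-1} \;=\; \binom{d+n-1}{n}
\]
reassembles the total — this is the ``diagonal'' viewpoint.

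For $\#\mathsf A$ the same argument applied to $\|J\| \leq d$, this time \emph{ignoring} the cube ceiling, gives $\binom{d+n}{n} = C_{d+n}^n$. The only correction is to discard those $J$ with some $j_k \geq d$; but since $\sum j_k \leq d$, at most one coordinate can overflow, and when it does it must equal exactly $d$ with all other entries zero. These ``corner'' vectors are in bijection with the $n$ coordinate axes, so the final count is $C_{d+n}^n - n$. The only step requiring a moment of thought is this small boundary correction; I expect no substantive obstacle, since the slack $d - (d-1) = 1$ is too tight to admit more intricate inclusion--exclusion.
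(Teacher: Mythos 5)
Your count is correct and is exactly the ``diagonal of Pascal's triangle'' computation that the paper invokes without writing out: stars and bars for the bulk, plus the observation that under the simplicial bound the cube ceiling is invisible while under the supra-simplicial bound it excludes precisely the $n$ corner vectors $(d,0,\dots,0),\dots,(0,\dots,0,d)$, giving $\binom{d+n-1}{n}$ and $\binom{d+n}{n}-n$. One caveat: your opening step, ``$\|I\|\leq d+1$ becomes $\|J\|\leq d-1$ under $J=I-(1,\dots,1)$,'' is arithmetically valid only for $n=2$; for general $n$ the literal constraint of Definition \ref{def2.1} would give $\|J\|\leq d+1-n$ and hence $\binom{d+1}{n}$ nodes, not $C_{d+n-1}^n$. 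The bounds you actually work with, $\|J\|\leq d-1$ and $\|J\|\leq d$ (i.e.\ $\|I\|\leq d+n-1$ and $\|I\|\leq d+n$ in one-based indexing), are the ones forced by the Lemma's stated answer, by Example 2.2 ($\#\mathsf T=n+1$ for $d=2$), and by the Newton-diagram embedding in the proof of Theorem \ref{main_th}; so you have tacitly corrected what appears to be an $n=2$ carryover in the Definition rather than derived your constraints from it. Make that normalization explicit and the proof is complete.
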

\smallskip

Let $H_{j, i}$ be the $i$-th hyperplane of the color $\a_j$, and let  $L_{j, i}$ be a homogeneous linear polynomial in the  coordinates $(y_0, y_1, \dots , y_n)$ on the space $\A^{n+1}$ that defines $H_{j, i}$. Each $L_{j, i}$ is determined, up to proportionality, by $H_{j, i}$. In what follows, we fix particular linear forms $\{L_{j, i}\}_{i,j}$. Put $\mathcal L_j := \prod_{i \in [1, d]} L_{j, i}$. 

For any nonzero vector $\vec \l = (\l_1, \dots, \l_n) \in \A^n$, we consider the homogeneous polynomial of degree $d$ 
\begin{eqnarray}\label{P_Lambda}
\mathcal P_{\mathsf K,\, \vec\l} \; := \sum_{j \in [1, n]} \l_j \cdot \mathcal L_j.
\end{eqnarray}

Evidently, each polynomial $\mathcal P_{\mathsf K,\, \vec\l}$ vanishes at all the nodes of the cage $\mathsf K$. 
\smallskip

\begin{theorem}\label{main_th} Consider a subvariety $V \subset \P^n$, given by one or several homogeneous polynomial equations of degrees $\leq d$. 
\begin{itemize}
\item If $V$ contains all the nodes from a supra-simplicial set $\mathsf A$ of a $d^{\{n\}}$-cage $\mathsf K \subset \P^n$, then $V$ contains all $d^n$ nodes of the cage. Moreover, any such variety $V$ is given by polynomial equations of the form 
$\{\mathcal P_{\mathsf K, \vec\l} = 0\}_{\vec\l}$  for an appropriate choice of vectors $\vec\l$ (see (\ref{P_Lambda})).
\smallskip

\item In contrast, no such variety $V$ contains all the nodes from a simplicial set $\tilde{\mathsf T}$ of any \hfill\break $(d+1)^{\{n\}}$-cage $\tilde{\mathsf K} \subset \P^n$.
\end{itemize}
\end{theorem}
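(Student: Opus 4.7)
The plan is to prove both bullets by a joint induction on $n$, using the structural fact that slicing along a color-$n$ hyperplane behaves cleanly on these node sets. Specifically, restriction along $H_{n,1}$ sends the supra-simplicial set of a $d^{\{n\}}$-cage to the supra-simplicial set of the $(n-1)$-dimensional $d^{\{n-1\}}$-sub-cage cut out on $H_{n,1}$, while restriction along $H_{n,k}$ for $k \geq 2$ sends the remaining nodes $\{i_n = k\}$ in $\mathsf A$ to the simplicial set of the smaller $(d-k+2)^{\{n-1\}}$-sub-cage; the analogous statement holds for the simplicial set of a $(d+1)^{\{n\}}$-cage. Call the two bullets (A$_n$) and (B$_n$); the base case $n=1$ is the fundamental theorem of algebra on $\P^1$.

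For the inductive step of (A$_n$), let $F$ have degree $d$ and vanish on $\mathsf A$. By (A$_{n-1}$) applied to $F|_{H_{n,1}}$, one obtains $F|_{H_{n,1}} = \sum_{j<n} \mu_j\, \mathcal L_j|_{H_{n,1}}$, and lifting gives $F = \sum_{j<n}\mu_j \mathcal L_j + L_{n,1}\cdot F_1$ with $\deg F_1 = d-1$. Since each $\mathcal L_j$ already vanishes on all of $\mathsf N$, the polynomial $F_1$ vanishes at every $p \in \mathsf A$ with $i_n \geq 2$, so $F_1|_{H_{n,2}}$ vanishes on the simplicial set of a $d^{\{n-1\}}$-sub-cage. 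By (B$_{n-1}$) at parameter $d-1$, this restriction is identically zero, whence $F_1 = L_{n,2} F_2$. Iterating at $H_{n,3}, \ldots, H_{n,d}$, each step invokes (B$_{n-1}$) on a sub-cage of parameter $d-k+2$ at the matching degree $d-k+1$ and peels off the next factor $L_{n,k}$. After $d$ steps the remaining quotient is a scalar $c$, and reassembling yields $F = \sum_{j<n}\mu_j \mathcal L_j + c\cdot \mathcal L_n = \mathcal P_{\mathsf K,\vec\lambda}$ with $\vec\lambda = (\mu_1,\ldots,\mu_{n-1}, c)$, which automatically vanishes on all $d^n$ nodes. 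A polynomial of degree $e < d$ vanishing on $\mathsf A$ is reduced to the degree-$d$ case by multiplying by $\ell^{d-e}$ for a linear form $\ell$ nonzero at every node (choose coordinates so that $y_0$ serves): one applies (A$_n$) to $\ell^{d-e} F$ and then divides out at each node.

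Statement (B$_n$) is proved by the same telescope. Slice $G$ successively at $H_{n,1}, \ldots, H_{n,d+1}$; each restriction $G_{k-1}|_{H_{n,k}}$ lands on the simplicial set of a $(d-k+2)^{\{n-1\}}$-sub-cage at degree $\leq d-k+1$, and (B$_{n-1}$) forces it to zero, so $G_{k-1} = L_{n,k} G_k$. After $d+1$ peels the surviving quotient has negative degree and therefore vanishes, giving $G = 0$.

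The main technical difficulty will be the combinatorial bookkeeping at each peel: one must verify that the slice lies \emph{exactly} on the simplicial or supra-simplicial set of the correct sub-cage at the precisely correct parameter so that the matching inductive hypothesis triggers at the matching degree bound. Once this indexing is carefully tracked through one slice, the whole telescope is automatic, and the final claim that $V$ is cut out by equations of the form $\{\mathcal P_{\mathsf K,\vec\lambda} = 0\}_{\vec\lambda}$ is merely a restatement of the conclusion that each degree-$\leq d$ generator of the defining ideal of $V$ lies in the $n$-dimensional span of $\mathcal L_1, \ldots, \mathcal L_n$.
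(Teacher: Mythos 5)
Your proof is correct and follows essentially the same route as the paper's: induction on $n$, slicing along the $d$ hyperplanes of one color, invoking the supra-simplicial statement (A$_{n-1}$) on the first slice and the vanishing statement (B$_{n-1}$) on each subsequent slice to peel off the factors $L_{n,k}$ one at a time, and reassembling $F=\mathcal P_{\mathsf K,\vec\lambda}$; the second bullet is handled by the same telescope in both arguments. The combinatorial bookkeeping you flag as the main difficulty checks out as you state it --- the slice $\{i_n=k\}$, $k\ge 2$, is the simplicial set of a $(d-k+2)^{\{n-1\}}$-sub-cage precisely when one reads Definition \ref{def2.1} with the bound $\|I\|\le d+n$ (the reading forced by Lemma \ref{cardinality}), and this is exactly the fact the paper's proof relies on.
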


\begin{proof} As in the case of encaged plane curves \cite{K}, the argument is based on a combinatorial similarity between the Newton's diagram of a generic polynomial of degree $d$ in $n$ variables and a simplicial set $\tilde{\mathsf T}$ of nodes of any $(d+1)^{\{n\}}$-cage. Also the cardinality of such a Newton's diagram exceeds the cardinality of a supra-simplicial set $\mathsf A$ of nodes of a $d^{\{n\}}$-cage $\mathsf K$ by $n$. In other words, the dimension of the variety of hypersurfaces of degree $d$ in the space $\P^n$ exceeds $\#\mathsf A$ by $n-1$. Indeed, the
monomials in the affine variables $x_1, \dots, x_n$ of degree $\leq d$ (equivalently, the homogeneous monomials in the variables $y_0, \dots, y_n$ of degree $d$) are in one-to-one correspondence with the set $\mathsf B$ none-negative integral $n$-tuples $I \in \Z^n$, subject to the inequality $\|I\| \leq d$. At the same time, the nodes $\{p_I\}$ of an supra-simplicial set $\mathsf A$ satisfy the inequality $\| I\| \leq d + 2$ together with $\{1 \leq i_s \leq d\}_{s \in [1, n]}$.
Shifting  by the vector $(-1, \dots , -1)$ embeds $\mathsf A$ into $\mathsf B$ so that only the $n$ corners $(d, 0, \dots , 0), (0, d, \dots, 0), \dots (0, 0, \dots, d)$ 
of the Newton diagram remain outside of the shifted $\mathsf A$. Finally, proportional polynomials define the same hypersurface.  \smallskip

The following proof is recursive in nature. The induction is carried  in $n$, the dimension of the cage. We assume that the first assertion of the theorem is valid for all $d^{\{k\}}$-cages of any size $d$ in spaces of dimension  $k < n$, and the second assertion is valid for all cages of any size $d+1$ in spaces of dimension  $k < n$. 

Our argument relies on slicing $\mathsf K \supset \mathsf A$ by the hyperplanes $\{H_{1, i} = 0\}_{i \in [1, d]}$ of the first color $\a_1$, thus reducing the argument to families of cages in $(n-1)$-dimensional affine or projective spaces. This leads to a "Fubini-type cage theorem" in the spirit of \cite{GHS2} (see Figure \ref{supra} for guidance). \smallskip 

For any integer $s \in [1, d-1]$, we consider the $(d-s+1)^{\{n-1\}}$ sub-cage $\mathsf K^{[s]} \subset \mathsf K\, \cap \, H_{1, s} $, formed by the hyperplanes $$H_{1, s}\, \bigcap \, \big(\bigcup_{j \in [2,\,n],\; i \in [1,\, d-s+1]} H_{j, i}\big)$$ in $H_{1, s} \approx \P^{n-1}$. 
In the hyperplane $H_{1, s}$, the cage $\mathsf K^{[s]}$ is given by the equation $$\big\{\mathcal L\mathcal L^{[s]} \, := \prod_{j \in [2,\, n],\; i \in [1,\, d-s+1]} L_{j, i} \;= \;0\big\}.$$ 

We denote by $\mathsf T^{[s]}$ the simplicial set of nodes in $\mathsf T \cap \mathsf K^{[s]}$ and by $\mathsf A^{[s]}$---the set of nodes from the supra-simplicial set $\mathsf A \cap  \mathsf K^{[s]}$. Note that the set $\mathsf T^{[s]}$ can serve as a simplicial set and $\mathsf A^{[s]}$--- as a supra-simplicial set for the cage $\mathsf K^{[s]}$.
\smallskip

We start with a given homogeneous degree $d$ polynomial $P$ in the projective coordinates $[y_0: y_1: \dots y_n]$, which vanishes at all the nodes of a supra-simplicial set $\mathsf A$ of a $d^{\{n\}}$-cage $\mathsf K \subset \P^n$.

Consider the restriction of  $P$ to the first hyperplane $H_{1,1}$ of the color $\a_1$. 
Then $P$ vanishes at the supra-simplicial set $\mathsf A^{[1]} := \mathsf A \cap H_{1,1}$ of the induced $d^{\{n-1\}}$-cage $\mathsf K^{[1]} := \mathsf K \cap H_{1,1}$, the zero set of the polynomial $\mathcal L_2 \cdot \mathcal L_3 \cdot\; \dots \; \cdot \mathcal L_n$ in $H_{1, 1}$. By induction on $n$, the restriction $P|_{H_{1,1}}$ must be of the form $\mathcal P_1 := \sum_{j \in [2, n]} \l_j^{[1]} \cdot \mathcal L_j$ (being restricted to $H_{1,1}$) for some choice of the coefficients $\l_2^{[1]}, \dots \l_n^{[1]}$. For this special choice of $(\l_2^{[1]}, \dots \l_n^{[1]})$, the difference $P -  \mathcal P_1$ is identically zero on $H_{1,1}$. If a homogeneous polynomial $R$ vanishes on a hyperplane, given by a homogeneous linear polynomial $L$, then $R$ is divisible by $L$. Indeed, by a linear change of the homogeneous coordinates, we may choose $L$ as the first new variable and write down $R$ as $L\cdot Q + S$, where $S$ is a homogeneous polynomial that depends only on the rest of the new variables.  Since $S = S|_{\{L=0\}} = 0$, the polynomial $S = 0$ identically, and $R = L\cdot Q$.

Therefore $P -  \mathcal P_1$ is divisible by the liner polynomial $L_{1,1}$. So $P = \mathcal P_1 + L_{1,1}\cdot P_1$, where $P_1$ is a homogeneous polynomial of degree $d-1$.\smallskip

Next, we consider the restrictions of $P$ and $P_1$ to the hyperplane $H_{1, 2} = \{L_{1,2} = 0\}$ of color $\a_1$. Since both $P$ and $\mathcal P_1$ vanish at the set $\mathsf A \cap H_{1, 2}$ and, by Definition \ref{cage}, $L_{1,1} \neq 0$ at the points of $\mathsf A \cap H_{1, 2}$, we conclude that $P_1$ (of degree $d-1$) must vanish at the set $\mathsf A \cap H_{1, 2}$ as well. Note that $\mathsf A \cap H_{1, 2} = \mathsf A^{[2]}$ is a \emph{simplicial set} for the induced $d^{\{n-1\}}$-cage $\mathsf K^{[2]} \subset \mathsf K \cap H_{1,2}$. So by induction, any homogeneous polynomial of degree $d-1$ that vanishes at a simplicial set $\mathsf A^{[2]}$ of the $d^{\{n-1\}}$-cage $\mathsf K^{[2]}$ must vanish at $H_{1,2}$. Hence $P_1 = L_{1,2} \cdot P_2$ for some homogeneous polynomial $P_2$ of degree $d-2$. So we get $P = \mathcal P_1 + L_{1,1}\cdot L_{1,2} \cdot P_2$.

Similarly, we argue that of $P_2$ of degree $d-2$ vanishes on the simplicial set $\mathsf A^{[3]} \subset \mathsf A \cap H_{1,3}$ of the $(d-1)^{\{n-1\}}$-cage $\mathsf A^{[3]}$. Therefore $P_2|_{H_{1,3}}$ is zero, and $P_2 = L_{1,3}\cdot P_3$ for a homogeneous  polynomial $P_3$ of degree $d-3$. As a result, $P = \mathcal P_1 + L_{1,1}\cdot L_{1,2} \cdot L_{1,3}\cdot P_3$.

Continuing this reasoning, we get eventually 
$$P =  \mathcal P_1 +  \l(L_{1,1}\cdot L_{1,2}\cdot  \dots \cdot L_{1,n}) = \sum_{j \in [2, n]} \l_j^{[1]} \cdot \mathcal L_j +   \l \mathcal L_1,$$
where $\l$ is a constant.
Therefore,
$P = \l\cdot \mathcal L_1 + \sum_{j \in [2, n]} \l_j^{[1]} \cdot \mathcal L_j $
is of the form $\mathcal P_{\mathsf K, \vec\l}$ and must vanish at every node of the $d^{\{n\}}$-cage $\mathsf K \subset \P^n$.
\smallskip

By a similar reasoning, we will validate the second claim of the theorem.
So we take any polynomial $P$ of degree $d$ that vanishes at a simplicial set $\tilde{\mathsf T}$ of a $(d+1)^{\{n\}}$-cage $\tilde{\mathsf K} \subset \P^n$. As before, we slice $\tilde{\mathsf K}$ by the hyperplanes $\{H_{1, s }\}_{i \in [1, d+1]}$ of the color $\a_1$. Now all the slices $\tilde{\mathsf T}^{[s]}$ (including the first one) are simplicial sets on $\tilde{\mathsf K}^{[s]}$. The latter locus $\tilde{\mathsf K}^{[s]}$ is given by the equations 
$$\big\{\tilde{\mathcal L\mathcal L}^{[s]} \, := \prod_{j \in [2 ,n],\; i \in [1,\, d-s+2]} L_{j, i} \;= \;0\big\}.$$ 

 Since $P$ vanishes at $\tilde{\mathsf T}^{[1]}$, by the induction hypotheses, $P|_{H_{1,1}} = 0$. This implies that $P = L_{1,1}\cdot P_1$, where $P_1$ is a homogeneous polynomial of degree $d_1$. The set $\tilde{\mathsf T}^{[2]}$ is simplicial in the cage $d^{\{n-1\}}$-cage. Since $L_{1,1}|_{\tilde{\mathsf T}^{[2]}} \neq 0$, we get that $P_1$ must vanish at the nodes from $\tilde{\mathsf T}^{[2]}$. By induction, this implies that $P_1|_{H_{1,1}} = 0$ and thus is divisible by $L_{1,1}$. So $P= L_{1,1}\cdot L_{1,2}\cdot P_2$ for a homogeneous polynomial $P_2$ of degree $d-2$. Continuing this process, we get $P= L_{1,1}\cdot L_{1,2}\cdot\, \dots, \cdot  L_{1,d} \cdot \l$ must vanish at the unique node of the set $\tilde{\mathsf T}^{[d+1]}$. This forces $\l = 0$, and so $P$ is identically zero. 
\smallskip

Finally, the validity of the basis of induction ``$n=1$" is obvious for univariate polynomials of any degree $d$. In fact, Theorem \ref{main_th} has been proven in \cite{K} for $n=2$.
\smallskip

Since the varieties $V$ we consider in the theorem are defined by polynomials of degrees $\leq d$, the claim follows.
\end{proof}

\noindent{\bf Remark 2.1.}  Note that the assumption that $\mathsf A$ is supra-simplicial set in Theorem \ref{cage} is essential:  not any subset of nodes of the cardinality $\#\mathsf A$ from a $d^{\{n\}}$-cage imposes independent relations on the set of homogeneous polynomials of degree $d$ in $n+1$ variables! 

For example, in a $4\times 4$-cage, $\#\mathsf A = 13$. However, if $\mathsf B$ is the complement to the set of four nodes $\mathsf C := \{p_{42}, p_{43}, p_{44}\}$, then not every curve of degree $4$ that contains $\mathsf B$ will contain $\mathsf C$. In fact,  $\mathsf B$ is contained in the union of three red and one blue lines from the cage; they all miss $\mathsf C$.
\hfill $\diamondsuit$
\smallskip

\noindent {\bf Example 2.5.} Consider any curve $C$ in $\P^3$, given by homogeneous polynomial equations of degree $\leq 3$ (typically, $C$ is of degree $9$). If $C$ passes through $17$ nodes of a supra-simplicial set $\mathsf A$ of nodes of a $3^{\{3\}}$-cage, then it passes through all the $27$ nodes of the cage.

A similar conclusion holds for any surface of degree $3$ in $\P^3$ that passes through the $17$ nodes from $\mathsf A$. \hfill $\diamondsuit$
\smallskip

\begin{corollary}\label{d_exactly} Consider a subvariety $V \subset \P^n$, given by one or several homogeneous polynomial equations of degrees $\leq d$. If $V$ contains all the nodes from a supra-simplicial set $\mathsf A$ (of cardinality $C_{d+n}^n -  n$) in a $d^{\{n\}}$-cage $\mathsf K \subset \P^n$, then all the polynomials that define $V$ are exactly of degree $d$.
\end{corollary}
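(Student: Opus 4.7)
The plan is to extract from Theorem \ref{main_th} a dimension bound on the space of degree-$d$ forms vanishing on $\mathsf A$, and then amplify any hypothetical lower-degree relation to a contradiction via multiplication. Writing $R_k$ for the space of homogeneous polynomials of degree $k$ in the $n+1$ variables $y_0,\dots,y_n$, the proof of Theorem \ref{main_th} (first bullet) shows that every element of $R_d$ vanishing on $\mathsf A$ has the shape $\mathcal P_{\mathsf K,\vec\lambda}=\sum_{j=1}^n \lambda_j\mathcal L_j$. Let $W_d\subset R_d$ denote the subspace of degree-$d$ forms that vanish on $\mathsf A$; then $W_d\subseteq \mathrm{span}\{\mathcal L_1,\dots,\mathcal L_n\}$, so $\dim W_d\le n$.

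Now suppose, aiming at a contradiction, that $V$ admits a nonzero defining polynomial $P$ of degree $e\le d$ with $e<d$. Since $V\supset \mathsf A$, the form $P$ vanishes on $\mathsf A$. For every $M\in R_{d-e}$ the product $MP$ lies in $R_d$ and vanishes on $\mathsf A$, hence $MP\in W_d$. The multiplication map
\[
\mu_P\colon R_{d-e}\longrightarrow W_d,\qquad M\longmapsto MP,
\]
is injective, because $\A[y_0,\dots,y_n]$ is an integral domain and $P\ne 0$. Consequently $\dim R_{d-e}\le \dim W_d\le n$.

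But $\dim R_{d-e}=\binom{d-e+n}{n}\ge\binom{n+1}{n}=n+1$ since $d-e\ge 1$, which contradicts the bound just obtained. Therefore $e=d$, and every defining polynomial of $V$ has degree exactly $d$.

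The only conceptually nontrivial step is the first: recasting Theorem \ref{main_th} as the dimension bound $\dim W_d\le n$. After that, the corollary reduces to the injectivity of multiplication by a nonzero polynomial together with the elementary inequality $\binom{d-e+n}{n}>n$ for $d>e$. No additional linear-independence check for $\{\mathcal L_j\}$ is needed, since only the ambient bound $\dim W_d\le n$ is invoked.
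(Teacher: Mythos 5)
Your proof is correct, but it follows a genuinely different route from the paper's. The paper disposes of the corollary in one line by invoking the \emph{second} bullet of Theorem \ref{main_th} (reindexed from $d+1$ to $d$): a homogeneous form of degree $<d$ vanishing on the simplicial subset $\mathsf T\subset\mathsf A$ is identically zero, so any nontrivial defining polynomial must have degree exactly $d$. You instead use the \emph{first} bullet --- more precisely, the fact established in its proof that every degree-$d$ form vanishing on $\mathsf A$ lies in $\mathrm{span}\{\mathcal L_1,\dots,\mathcal L_n\}$, hence $\dim W_d\le n$ --- and then amplify a hypothetical relation $P$ of degree $e<d$ by the injective multiplication map $R_{d-e}\to W_d$, getting the contradiction $n+1\le\binom{d-e+n}{n}\le n$. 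Both arguments are sound; note only that the theorem's first bullet is literally phrased in terms of varieties, so you are right to cite its \emph{proof} (which works with individual polynomials) rather than its statement. The paper's argument is shorter and rests on the combinatorially easier half of the theorem; yours is a nice self-contained dimension-count that uses only the ambient bound $\dim W_d\le n$ and the integrality of the polynomial ring, and it incidentally re-proves (in the weaker form ``no nonzero form of degree $<d$ vanishes on the supra-simplicial set $\mathsf A$'') the rigidity that the paper imports from the simplicial-set claim.
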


\begin{proof} By Theorem \ref{main_th}, if a homogeneous polynomial $P$ of degree less than $d$, which vanishes at $V$, also vanishes at the simplicial set $\mathsf T \subset \mathsf A$ of the $d^{\{n\}}$-cage $\mathsf K$, then $P = 0$ identically. Thus $\deg P = d$, provided that $P$ is nontrivial.
\end{proof}

In this paper, we will use the following ``working  definition" of complete intersections.

\begin{definition}\label{complete_int}
A purely $(n-k)$-dimensional Zariski closed subset $V \subset \P^n$ 
over a field $\A$ of characteristic zero is a {\sf complete intersection} of the type $(d_1, \ldots, d_k)$, if $V$ is an intersection of hypersurfaces $H_1, \ldots, H_k$ in $\P^n$ of degrees $d_1, \ldots, d_k$ such that, at a generic (nonsingular) point $x$ of each component of $V$, the tangent hyperspaces $\{T_xH_j\}_{j\in [1, k]}$ are in general position in the ambient tangent space $T_x\P^n$.  \hfill $\diamondsuit$
\end{definition}
\smallskip

The next proposition was stated  by the reviewer of this paper as a natural generalization of some claims from our Theorem \ref{smooth}. It puts this theorem in a larger context.

\begin{proposition}\label{TH_A} Suppose that $W \subset \P^n$  is a smooth 
complete intersection of $r \leq n$ hypersurfaces of degree $d$ and that $V \supset  W$ is an algebraic subset of $\P^n$ that is (set-theoretically) an intersection of hypersurfaces of degree  $d$. Then is $V$ is a complete intersection of hypersurfaces of degree $d$, and $V$ is smooth along $W$.  
\end{proposition}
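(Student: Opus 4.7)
The plan is to exploit the rigid linear structure of $I_d(W)$ that comes from $W$ being a \emph{smooth} complete intersection of $r$ hypersurfaces of degree exactly $d$. Write $W=\{f_1=\cdots=f_r=0\}$ with $\deg f_i=d$; the complete-intersection hypothesis implies that the saturated homogeneous ideal of $W$ is $(f_1,\dots,f_r)$, so its degree-$d$ graded piece is $I_d(W)=\mathrm{span}_\A(f_1,\dots,f_r)$, a vector space of dimension exactly $r$. Since $V\supset W$, one has $I_d(V)\subset I_d(W)$. Set $\rho:=\dim_\A I_d(V)\le r$, pick a basis $g_1,\dots,g_\rho$ of $I_d(V)$, and write $g_i=\sum_j a_{ij}f_j$ for some $\rho\times r$ matrix $A$ of rank $\rho$. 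The hypothesis that $V$ is a set-theoretic intersection of degree-$d$ hypersurfaces then gives $V=\{g_1=\cdots=g_\rho=0\}$ as a set.

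Next I would verify transversality of the $g_i$'s along $W$. At any $w\in W$, smoothness of $W$ at $w$ together with the complete-intersection structure of $W$ says precisely that $d_w f_1,\dots,d_w f_r$ is a basis of the annihilator $(T_wW)^0\subset T_w^*\P^n$. Since $d_w g_i=\sum_j a_{ij}\,d_w f_j$ and $A$ has rank $\rho$, the covectors $d_w g_1,\dots,d_w g_\rho$ are linearly independent at every $w\in W$. Equivalently, the hypersurfaces $G_i=\{g_i=0\}$ meet transversally at every point of $W$, so the scheme $\{g_1=\cdots=g_\rho=0\}$ is smooth of codimension $\rho$ along $W$; this already proves the smoothness statement of the proposition.

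For the complete-intersection claim I would show that $V$ is purely of codimension $\rho$ and then check the tangent clause of Definition \ref{complete_int} at one smooth point of each irreducible component. Extend $g_1,\dots,g_\rho$ by $h_1,\dots,h_{r-\rho}$ to a basis of $I_d(W)$; then $(g_1,\dots,g_\rho,h_1,\dots,h_{r-\rho})=(f_1,\dots,f_r)$ is an ideal of height $r$ in the Cohen--Macaulay polynomial ring $\A[y_0,\dots,y_n]$, hence a regular sequence, and so is its initial segment $g_1,\dots,g_\rho$. Therefore $V=\{g_1=\cdots=g_\rho=0\}$ is pure of codimension $\rho$. To check the transversality clause on each component $V_i$ of $V$, set $Y:=\{h_1=\cdots=h_{r-\rho}=0\}$: then $V\cap Y=\{g_1=\cdots=g_\rho=h_1=\cdots=h_{r-\rho}=0\}=W$, while the projective dimension estimate $\dim V_i+\dim Y\ge (n-\rho)+(n-r+\rho)=2n-r\ge n$ (using $r\le n$) forces $V_i\cap Y=V_i\cap W$ to be nonempty, and by the previous paragraph any point of $V_i\cap W$ is a smooth point of $V_i$ at which the $G_j$ are in general position.

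The main obstacle I foresee is the passage from pointwise transversality of the $g_i$'s along $W$ to global pure-dimensionality of $V$: a priori $V$ could acquire excess lower-codimensional components lying entirely off $W$, in which case the complete-intersection conclusion would fail. The Cohen--Macaulay regular-sequence argument is exactly what rules this out, and the projective dimension theorem then guarantees that every component of $V$ still meets $W$, so the transversality already established on $W$ is enough to verify Definition \ref{complete_int} for all of $V$ in one stroke.
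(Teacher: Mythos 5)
Your proposal is correct, and its core coincides with the paper's argument: both proofs observe that every degree-$d$ form vanishing on $V\supset W$ lies in the degree-$d$ graded piece of $\mathcal I(W)$, which is exactly the $r$-dimensional span of the defining forms of $W$; both then pass to a maximal linearly independent subset and use the linear independence of the differentials $d\tilde f_1,\dots,d\tilde f_r$ at points of the smooth $W$ to conclude that $V$ is cut out transversally, hence smoothly, along $W$. Where you genuinely go beyond the paper is in the second half. The paper's proof ends by asserting that the independence of $d\tilde f_1,\dots,d\tilde f_t$ at a point $p\in W$ gives $\textup{codim}(V,\P^n)=t$; as written, that only controls the components of $V$ passing through $W$, and the paper does not address the possibility of extra components of $V$ of the wrong dimension lying entirely off $W$. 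You close exactly this gap: the observation that $g_1,\dots,g_\rho,h_1,\dots,h_{r-\rho}$ generate the height-$r$ ideal $(f_1,\dots,f_r)$ in the Cohen--Macaulay polynomial ring makes $g_1,\dots,g_\rho$ a regular sequence, whence $V$ is unmixed of codimension $\rho$; and the projective dimension theorem applied to $V_i$ and $Y=\{h_1=\cdots=h_{r-\rho}=0\}$ (using $r\le n$) forces every component $V_i$ to meet $W$, so the transversality clause of Definition \ref{complete_int} is verified on each component at a point of $V_i\cap W$ (and hence at a generic point, since linear independence of the differentials is an open condition). So your argument buys a complete verification of pure-dimensionality and of the component-by-component tangency condition, at the cost of invoking some commutative algebra (unmixedness for regular sequences) that the paper's purely linear-algebraic sketch avoids but, strictly speaking, needs. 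One small point worth making explicit in your write-up: the identification $I_d(W)=\mathrm{span}_\A(f_1,\dots,f_r)$ uses that the scheme cut out by $f_1,\dots,f_r$ is reduced, which follows from the smoothness of $W$ via the Jacobian criterion.
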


\begin{proof} Let homogeneous polynomials $g_1, \ldots , g_r \in \A[x_0, \ldots , x_n]$ of  degree $d$ be generators of the zero ideal $\mathcal I(W)$ of the complete intersection $ W$ of codimension $r$.  In particular, they are linearly independent vectors of the $d$-graded portion of the ring $\A[x_0, \ldots , x_n]$. Let $V$ be the zero set of homogeneous degree $d$ polynomials $f_1, \ldots , f_s \in \A[x_0, \ldots , x_n]$. Since $V \supset  W$,  each $f_j \in  \mathcal I(W)$. Since $f_j$ is of the degree $d$, we get $f_j = \sum_k c_{kj}\, g_k$,  a linear combination of $g_k$'s over $\A$. We may choose a maximal subset of the set $\{f_1, \ldots , f_s\}$, comprised of linearly independent elements. By linear algebra, $s \leq r$. Abusing notations, we denote  these new elements by $f_1, \ldots , f_t$. To show that $V$ is a complete intersection, we need to show that the codimension of  $V$ in $\P^n$ is $t$.  At a smooth point $p \in W$, in an affine chart that contains $p$, we replace the homogeneous polynomials $g_1, \ldots , g_r$ by their non-homogeneous representations $\tilde g_1, \ldots , \tilde g_r$ in the affine coordinates. Then the differentials $d\tilde g_1, \ldots , d\tilde g_r$ at $p$ (that span the normal cotangent to $ W$ bundle) are linearly independent. Thus, by linear algebra, $d\tilde f_1, \ldots , d\tilde f_t$ are linearly independent at $p$ as well.  Therefore $V$ is smooth at $p \in  W$ and $\text{codim}(V, \P^n) = t$. 
\end{proof}

We choose the  node set $\mathsf N \subset \mathsf K$ for the role of the complete intersection $W$ from Proposition \ref{TH_A}. With this choice, Proposition \ref{TH_A},  being combined with Theorem \ref{main_th}, implies directly  Theorem \ref{smooth} below. 

\begin{theorem}\label{smooth} Let  $V \subset \C\P^n$ be a subvariety of codimension $s$, given by one or several homogeneous polynomial equations of degrees $\leq d$. 

If $V$ contains all the nodes from a supra-simplicial set $\mathsf A$ of a $d^{\{n\}}$-cage $\mathsf K \subset \C\P^n$, then $V$ is a complete intersection of the multi-degree $(\underbrace{d, \dots , d}_{s})$, which is smooth at each node of the cage $\mathsf K$. Thus, $\deg V = d^{n-\dim V}$. \hfill $\diamondsuit$
\end{theorem}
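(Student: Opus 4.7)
The plan is to combine Theorem \ref{main_th} and Proposition \ref{TH_A} with the node set $\mathsf N$ itself playing the role of the smooth complete intersection $W$. The required chain of implications is essentially outlined in the paragraph immediately preceding the statement, so my task reduces to verifying each link carefully.

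First I would apply Theorem \ref{main_th} to $V$: since $V$ contains the supra-simplicial set $\mathsf A$, it must contain the entire node set $\mathsf N$, and every homogeneous polynomial defining $V$ may be written in the form $\mathcal P_{\mathsf K,\vec\l}$ for an appropriate choice of $\vec\l$. By Corollary \ref{d_exactly}, any such nontrivial polynomial has degree exactly $d$. Hence $V$ is set-theoretically the common zero locus of homogeneous polynomials of degree $d$, which is precisely the second hypothesis of Proposition \ref{TH_A}.

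Next I would verify that $\mathsf N$ itself satisfies the first hypothesis of Proposition \ref{TH_A}. Writing $\mathsf N = \{\mathcal L_1 = \cdots = \mathcal L_n = 0\}$, one has a zero-dimensional variety cut out by $n$ hypersurfaces of degree $d$. The transversality built into Definition \ref{cage} says that at each $p \in \mathsf N$ exactly one linear factor of each $\mathcal L_j$ vanishes and that the $n$ hyperplanes of distinct colors through $p$ are in general position; therefore the differentials $d\mathcal L_1|_p,\dots,d\mathcal L_n|_p$ are linearly independent, and $\mathsf N$ is a smooth complete intersection of type $(\underbrace{d,\dots,d}_{n})$.

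Finally I would feed $W:=\mathsf N$ together with the above $V$ into Proposition \ref{TH_A}. Its conclusion produces a complete-intersection structure for $V$ of multi-degree $(\underbrace{d,\dots,d}_{t})$ for some $t$, together with smoothness of $V$ along $\mathsf N$; the hypothesis $\textup{codim}(V,\P^n)=s$ then forces $t=s$. Bézout's theorem, applied to the $s$ transverse degree-$d$ hypersurfaces cutting out $V$, yields $\deg V = d^{s} = d^{\,n-\dim V}$. The only mild obstacle I anticipate is bookkeeping: one must confirm that the polynomials produced by Theorem \ref{main_th} are of degree \emph{exactly} $d$ (not less), so that Proposition \ref{TH_A} is legitimately applicable; this is exactly what Corollary \ref{d_exactly} delivers.
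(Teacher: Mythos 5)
Your proposal is correct and follows essentially the same route as the paper, which obtains Theorem \ref{smooth} precisely by taking $W=\mathsf N$ in Proposition \ref{TH_A} and combining it with Theorem \ref{main_th}. Your added checks (that $\mathsf N$ is a smooth complete intersection of type $(d,\dots,d)$, that Corollary \ref{d_exactly} guarantees the defining polynomials have degree exactly $d$, and the B\'ezout computation of $\deg V$) are exactly the bookkeeping the paper leaves implicit.
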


In turn, Theorem \ref{smooth} forces the following obvious logical conclusion. 

\begin{corollary}\label{not_complete} If a variety $V \subset \C\P^n$, given by homogeneous polynomials of degrees $\leq d$, is not a complete intersection, then it cannot be trapped in any $d^{\{n\}}$-cage in $\P^n$. \hfill $\diamondsuit$
\end{corollary}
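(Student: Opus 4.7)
The plan is to derive this as an immediate contrapositive of Theorem \ref{smooth}, so the whole argument is essentially a one-liner in disguise; the only care needed is to unpack the phrase "trapped in a cage" and then to feed the hypothesis into the previous theorem.

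First I would make the meaning of "$V$ is trapped in a $d^{\{n\}}$-cage $\mathsf K$" explicit: it means that the variety $V\subset \C\P^n$ contains the full node set $\mathsf N$ of $\mathsf K$, i.e.\ all $d^n$ nodes where hyperplanes of the $n$ distinct colors of $\mathsf K$ meet. With this reading the statement becomes purely formal.

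Next, I would argue by contrapositive. Suppose $V$ is trapped in some $d^{\{n\}}$-cage $\mathsf K\subset \C\P^n$. Fix any supra-simplicial set of nodes $\mathsf A\subset \mathsf N$ (such an $\mathsf A$ exists for every cage, by Definition \ref{def2.1}, after an arbitrary choice of orderings of the hyperplanes of each color). Since $V\supset \mathsf N \supset \mathsf A$ and $V$ is cut out by homogeneous polynomials of degrees $\leq d$, the hypotheses of Theorem \ref{smooth} are met. That theorem then produces the conclusion that $V$ is a complete intersection of multi-degree $(\underbrace{d,\dots,d}_{s})$, where $s=\mathrm{codim}(V,\C\P^n)$. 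This contradicts the assumption that $V$ is not a complete intersection, and therefore no such cage can exist.

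There is essentially no obstacle here, since all of the content has already been done in Theorem \ref{main_th} and Proposition \ref{TH_A} (and hence in Theorem \ref{smooth}); the corollary is just the logical inversion of "trapped $\Rightarrow$ complete intersection". The only subtlety worth flagging is that the hypothesis on $V$ is set-theoretic (generated by polynomials of degree $\leq d$), which matches exactly the form in which Theorem \ref{smooth} is stated, so no additional scheme-theoretic argument is required.
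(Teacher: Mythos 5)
Your proposal is correct and matches the paper exactly: the corollary is stated as "the following obvious logical conclusion" forced by Theorem \ref{smooth}, i.e.\ precisely the contrapositive you describe. Your added care in unpacking "trapped in a cage" as containing the full node set (hence a supra-simplicial subset) is a reasonable explication of what the paper leaves implicit.
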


Perhaps, Theorem \ref{smooth} and Corollary \ref{not_complete} are valid also over the base field $\R$.\smallskip

\noindent {\bf Example 2.6.} Since the twisted cubic curve $\mathcal C: [s:  t] \to [s^3: s^2t : st^2: t^3]$ is not a complete intersection in $\C\P^3$, by Corollary \ref{not_complete}, $\mathcal C$ does not contain the nodes of any $3^{\{3\}}$-cage $\mathsf K$ in $\C\P^3$, or even the nodes from a supra-simplicial set $\mathsf A \subset \mathsf K$. \hfill $\diamondsuit$
\smallskip

\noindent {\bf Example 2.7.}\label{ex_Q} 
Despite looking diverse, all the figures in this paper depict varieties, attached to the nodes of $d^{\{n\}}$-cages $\mathsf K(\mathbf Q)$ that are produced following a very simple recipe. It starts with a small set $\mathbf Q \subset \A^n$ of ``nodes in the making" and uses the product structure in $\A^n$ as follows.   

Consider $d$ points $q_1, \dots, q_d \in \A^n$ such that, for each coordinate function $z_j: \A^n \to \A$, their $z_j$-coordinates are distinct. Let us denote by $\A(n, d)$ the space of such configurations $\mathbf Q := (q_1, \dots, q_d)$. Then each $\mathbf Q \in \A(n, d)$ produces a $d^{\{n\}}$-cage $\mathsf K(\mathbf Q) \subset \A^n$, formed by the hyperplanes $\big\{H_{j, i} := z_j^{-1}(z_j(q_i))\big\}_{j \in [1, n],\, i \in [1, d]}$. By Theorem \ref{main_th} and Theorem \ref{smooth}, for any $s \leq n$, the cage $\mathsf K(\mathbf Q)$ supports the family of varieties $X$ of the multi-degree $(\underbrace{d, \dots, d}_{s})$ and dimension $n-s$ that contain the node set $\mathsf N(\mathbf Q)$ of $\mathsf K(\mathbf Q)$.  By Theorem \ref{unique_hyper} below, the family is parametrized by points of the Grassmanian $\mathsf{Gr}_\A(n, n-s)$.
\smallskip

Over $\C$, we can enhance this cage construction. Consider the complex Vi\`{e}te map $\Sigma: \C^n \to \mathsf{Sym}^n\C \approx \C^n$, given by the elementary symmetric polynomials in $z_1, \dots, z_n$. It takes the ``roots" $z_1, \dots, z_n \in \C$ to the coefficients of the monic polynomial $\prod_{j=1}^n (x - z_j)$ in the variable $x$. 

We denote by $\mathcal D$ the hypersurface in $\mathsf{Sym}^n\C$, formed by the $x$-polynomials with multiple roots. It is called the {\sf discriminant variety}. Remarkably, the $\Sigma$-images of the hyperplanes $\{H_{j, i} \subset \C^n \}$ are hyperplanes, \emph{tangent} to $\mathcal D$; moreover, the normal vector to $\Sigma(H_{j, i})$, whose $n^{th}$ coordinate is $1$, has its $(n-1)^{st}$ coordinate equal to $z_j(q_i)$ (\cite{K2}, Corollary 6.1)!  

Therefore, $\{\Sigma(H_{j, i})\}_{j, i}$ form a new $d^{\{n\}}$-cage $\Sigma(\mathsf K(\mathbf Q))$ in $\mathsf{Sym}^n\C \approx \C^n$, whose hyperplanes are tangent to the discriminant variety $\mathcal D$. The nodes of $\Sigma(\mathsf K(\mathbf Q))$ reside in $\C^n$. Via the tangency property, the cage $\Sigma(\mathsf K(\mathbf Q))$ is completely determined by the configuration $\Sigma(\mathbf Q)$ of $d$ points in $\C^n \setminus \mathcal D$, since any point $p \in \C^n \setminus \mathcal D$ belongs to exactly $n$ hyperplanes that are tangent to $\mathcal D$ \cite{K2}. As a result, any generic (that is, of the form $\Sigma(\C(n, d))$) configuration $\mathbf P$ of points $p_1, \dots, p_d \in \C^n \setminus \mathcal D$ produces a $d^{\{n\}}$-cage $\mathsf K(\mathbf P)$ in $\C^n$, whose hyperplanes are alined with the tangent cones of $\mathcal D$. Again, for any $s \leq n$, the cage $\mathsf K(\mathbf P)$ supports a family of complex varieties $V$ of the multi-degree $(\underbrace{d, \dots, d}_{s})$ and dimension $n-s$ that contain the node set $\mathsf N(\mathbf P)$ of $\mathsf K(\mathbf P)$.  By Theorem \ref{unique_hyper}, the family is parametrized by points of the Grassmanian $\mathsf{Gr}_\C(n, n-s)$.\smallskip

Thus we got an effective device for producing varieties in cages.  A configuration $\mathbf Q \in \C(n,d)$ or a configuration $\mathbf P \in \Sigma(\C(n,d))$, together with a choice of a $(n-s)$-dimensional affine subspace $\tau \subset \C^n$ at a point $q_1\in \mathbf Q$, or a $(n-s)$-dimensional affine subspace $\tilde\tau \subset \C^n$ at a point $p_1 \in \mathbf P$, produce unique varieties $X(\mathbf Q, \tau) \subset \C^n$ and  $Y(\mathbf P, \tilde\tau) \subset \C^n_{\mathsf{coef}}$ of the dimension $n-s$ that are attached to the nodes of the two cages, respectively. \smallskip

Over the real numbers, the outcome is similar, if we consider only the chamber $\mathcal C$ in the space $\R^n_{\mathsf{coef}}$ of monic real polynomials with all real roots; $\mathcal C$ is one of many chambers in which the real discriminant hypersurface $\mathcal D_\R$ divides $\R^n_{\mathsf{coef}}$. So, over $\R$, the cage-generating configuration $\mathbf P$ must be chosen in the chamber $\mathcal C$.
\smallskip

The construction $(\mathbf Q, \tau) \Rightarrow X(\mathbf Q, \tau)$ has one pleasing property: if the configuration $\mathbf Q$ consists of $d$ points with all the coordinates in $\Z$ or $\Q$, then the variety $X(\mathbf Q, \tau)$ contains at least $d^n$ integral or rational points. Since the Vi\`{e}te map $\Sigma$ is given by elementary symmetric polynomials with integer coefficients, the same property holds for any variety $Y(\Sigma(\mathbf Q), \tilde\tau))$ that is attached to the nodes of the cage $\mathsf K(\Sigma(\mathbf Q)) \subset \mathsf{Sym}^n\C$. 
\hfill $\diamondsuit$
\smallskip

\begin{remark}
\emph{Recall that, thanks to the Lefschetz Hyperplane Theorem (see Corollary 7.3 and Theorem 7.4 in \cite{Mi}), the topology of complete intersections is very special. In particular, by a theorem of Thom, the diffeomorphism type of a smooth complete intersection $X$ over $\C$ is determined by its dimension and its multi-degree $(d_1, \ldots, d_k)$ \cite{LW}.  Therefore, the combinatorics of a supra-simplicial set imposes strong restrictions on the homology groups, homotopy types, and characteristic classes of the varieties inscribed in a given cage.} \hfill $\diamondsuit$
\end{remark}
\smallskip

Let us consider a $n$-dimensional polyhedron $\mathcal P$ in $\R^n$, whose combinatorics is modeled after the combinatorics of a $n$-cube. The opposite faces of $\mathcal P$ are labeled with the same color; so the total pallet has $n$ colors. 
We wish to place the vertices  of $\mathcal P$ on a given variety $V \subset \R^n$ that is defined as the zero set of several \emph{quadratic} polynomials (think about $V$ as being an ellipsoid or a hyperboloid). The next corollary testifies that in order to accomplish this task, one needs to place just few vertices of $\mathcal P$ on $V$, the rest of the vertices  will reside in $V$ automatically.  Actually, the following direct corollary of Theorem \ref{smooth} makes sense over any infinite field $\A$.\smallskip

Recall that \emph{any} complex projective variety may be given as an intersection of {\sf quadrics} in an appropriate projective space \cite{M1}.

\begin{corollary}{\bf (Varieties  in the Cube Cage)} 
\noindent Let a variety $V \subset \P^n$ be given by homogeneous polynomials of degree $2$ and contains all $\frac{1}{2}(n^2 + n + 2)$ nodes of a supra-simplicial set $\mathsf A$ in a $2^{\{n\}}$-cage $\mathsf K \subset \P^n$.  \smallskip

Then $V$ is a complete intersection of degree $2^s$, where $s = n - \dim(V)$. Moreover, $V$ contains all $2^n$ nodes of $\mathsf K$ and is smooth in their vicinity. \hfill $\diamondsuit$
\end{corollary}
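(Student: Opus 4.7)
The plan is to recognize this statement as the specialization of Theorem \ref{smooth} to the case $d = 2$, preceded by a short combinatorial verification of the stated cardinality. I first want to check that a supra-simplicial set in a $2^{\{n\}}$-cage really has $\frac{1}{2}(n^2+n+2)$ nodes: by Lemma \ref{cardinality}, $\#\mathsf A = \binom{d+n}{n} - n$, and plugging in $d = 2$ gives
\[
\binom{n+2}{n} - n \;=\; \frac{(n+2)(n+1)}{2} - n \;=\; \frac{n^2+n+2}{2},
\]
which matches the hypothesis (this is also Example 2.2). So the hypothesis of Theorem \ref{smooth} is satisfied verbatim with $d = 2$.

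Next I would apply the first bullet of Theorem \ref{main_th}: any $V$ cut out by homogeneous polynomials of degree $\leq 2$ that passes through $\mathsf A$ automatically contains all $2^n$ nodes of the cage $\mathsf K$, and every defining polynomial can be written in the form $\mathcal P_{\mathsf K,\vec\lambda}$ of \eqref{P_Lambda}. In particular each defining quadric must actually be of degree exactly $2$ by Corollary \ref{d_exactly}. Feeding this into Theorem \ref{smooth} (with $d = 2$) then yields at once that $V$ is a complete intersection of multi-degree $(\underbrace{2,\dots,2}_{s})$, where $s = n - \dim V$, that $V$ is smooth at each node of $\mathsf K$, and that $\deg V = 2^{n-\dim V} = 2^s$.

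The only real content to check is that Theorem \ref{smooth} and Theorem \ref{main_th} apply in the generality claimed by the corollary. Theorem \ref{smooth} is stated over $\C$ while the corollary is stated for $\P^n$ over an arbitrary infinite field $\mathbb A$ (the remark preceding the statement emphasizes this). The potential obstacle, and the one place I would spend care, is verifying that nothing in the underlying proofs actually requires $\mathbb A = \C$: the proof of Theorem \ref{main_th} is a combinatorial induction that only uses polynomial division by linear forms and the nonvanishing of $L_{1,1}$ at nodes off $H_{1,1}$, while Proposition \ref{TH_A} is stated over any field of characteristic zero. The smoothness argument in Proposition \ref{TH_A} uses only the linear independence of the differentials at a point of $W$ (here $W = \mathsf N$), which is a purely linear-algebraic fact valid over any field. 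Thus all the hard work has been done upstream, and the corollary is essentially pure bookkeeping.
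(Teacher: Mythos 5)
Your proposal is correct and matches the paper's (implicit) argument exactly: the paper presents this as a direct specialization of Theorem \ref{smooth} to $d=2$, and your cardinality check $\binom{n+2}{n}-n=\tfrac{1}{2}(n^2+n+2)$ agrees with Lemma \ref{cardinality} and Example 2.2. Your remark about the base field is a fair caution, but it is the same caveat the paper itself makes just before stating the corollary, so nothing further is needed.
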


\noindent {\bf Example 2.9.} If a smooth curve $C \subset \P^3$ is given by two homogeneous quadratic forms and contains $7$ nodes of a $2^{\{3\}}$-cage $\mathsf K \subset \P^3$, then it contains the $8^{th}$ node of the cage. Moreover, $C$ is a complete intersection of the multi-degree $(2, 2)$. 
In fact, such a curve $C$ is {\sf elliptic} (i.e., smooth and of genus $1$). 
\hfill $\diamondsuit$
\smallskip

Proposition \ref{TH_B} below is another claim, formulated by the reviewer. It frames well our Corollary \ref{unique_hyper}. 

\begin{proposition}\label{TH_B} Suppose that $W \subset \P^n$  is a smooth 
complete intersection of $r \leq n$ hypersurfaces of degree $d$. Pick a point $p \in  W$ and a linear subspace $\tau_p \subset T_p\P^n$ of dimension $\delta > n-r$ such that $\tau_p \supset T_p W$. 
Then there exists a unique complete intersection $V \subset \P^n$ of $n- \delta$ hypersurfaces of degree $d$ such that $V \supset W$ and $T_pV = \tau_p$.  
\end{proposition}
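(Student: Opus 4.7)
The plan is to translate the problem into linear algebra on the conormal space to $W$ at $p$ and then invoke Proposition \ref{TH_A} to upgrade the resulting set-theoretic intersection of degree-$d$ hypersurfaces into a complete intersection that is smooth along $W$. Let $g_1,\ldots,g_r \in \A[x_0,\ldots,x_n]$ be degree-$d$ homogeneous generators of $\mathcal I(W)$. As in the first step of the proof of Proposition \ref{TH_A}, every degree-$d$ hypersurface containing $W$ is of the form $\{\sum_k c_k g_k = 0\}$, so all the freedom in choosing $V$ lies in selecting an $(n-\delta)$-dimensional linear subspace of coefficient vectors in $\A^r$.

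The next step is to identify the correct subspace. Consider the linear map $\Phi:\A^r \to T_p^*\P^n$ given by $\Phi(c_1,\ldots,c_r) = \sum_k c_k\, dg_k|_p$. Because $W$ is smooth at $p$ and cut out by the $g_k$, the covectors $dg_k|_p$ are linearly independent and their total span equals the conormal $(T_p W)^\perp \subset T_p^*\P^n$, a subspace of dimension $r$. The hypothesis $\tau_p \supset T_p W$ implies $\tau_p^\perp \subset (T_pW)^\perp$ with $\dim \tau_p^\perp = n-\delta$, so $L := \Phi^{-1}(\tau_p^\perp) \subset \A^r$ is an $(n-\delta)$-dimensional subspace.

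For existence, pick any basis $c_j := (c_{1j},\ldots,c_{rj})$, $j=1,\ldots,n-\delta$, of $L$, and set $f_j := \sum_k c_{kj} g_k$ and $V := \{f_1=\cdots=f_{n-\delta}=0\}$. Each $f_j\in \mathcal I(W)$, so $V\supset W$, and Proposition \ref{TH_A} applied to this $V$ delivers that $V$ is a complete intersection of $n-\delta$ hypersurfaces of degree $d$, smooth along $W$, and in particular smooth at $p$. Since $df_j|_p = \Phi(c_j)\in \tau_p^\perp$ and the $df_j|_p$ are linearly independent (the $c_j$ are, and $\Phi$ is injective on $L$), they span $\tau_p^\perp$ by dimension count, whence $T_pV = \tau_p$.

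For uniqueness, suppose $V'$ is another complete intersection of $n-\delta$ degree-$d$ hypersurfaces with $W\subset V'$ and $T_pV' = \tau_p$. Writing its defining polynomials as $f'_j=\sum_k c'_{kj} g_k$, the proof of Proposition \ref{TH_A} forces the $f'_j$, and hence the vectors $c'_j$, to be linearly independent, while the tangent condition gives $df'_j|_p = \Phi(c'_j) \in \tau_p^\perp$, i.e., $c'_j \in L$. By dimension the $c'_j$ form a basis of $L$, so $\{f_j\}$ and $\{f'_j\}$ have the same linear span inside the degree-$d$ piece of $\A[x_0,\ldots,x_n]$ and therefore cut out the same set $V = V'$. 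The only nontrivial input beyond linear algebra on conormal spaces is Proposition \ref{TH_A}, which must supply both the codimension count and the smoothness along $W$; once that is in hand, the tangent-space matching reduces to a routine dimension argument.
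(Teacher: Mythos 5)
Your proof is correct and follows essentially the same route as the paper: both identify the candidate defining polynomials with the $(n-\delta)$-dimensional subspace of $\mathsf{Span}_\A\{g_1,\ldots,g_r\}$ whose differentials at $p$ annihilate $\tau_p$ (your $\Phi^{-1}(\tau_p^\perp)$ is the paper's $\mu^\ast(\tau_p)$ pulled back to coefficient space), and then conclude by the dimension count. Your version is slightly more explicit in outsourcing the smoothness and codimension claims to Proposition \ref{TH_A} and in spelling out the uniqueness step, which the paper only sketches.
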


\begin{proof} As in the proof of Proposition \ref{TH_A}, let homogeneous polynomials $g_1, \ldots , g_r \in\hfill\break  \A[x_0, \ldots , x_n]$ of degree $d$ be generators of the zero ideal $\mathcal I(W)$ of a smooth complete intersection $W$ of the codimension $r$. We denote by $L(W)$ the linear space spanned over $\A$ by $g_1, \ldots , g_r$. In an affine chart that contains the given point $p \in W$, we replace $g_1, \ldots , g_r$ by their expressions as polynomials $\tilde g_1, \ldots , \tilde g_r$ in the affine coordinates. Since $W$ is smooth at $p$, the differentials  $d\tilde g_1, \ldots , d\tilde g_r$ at $p$ vanish exactly at the tangent subspace $T_p W$ of codimension $r$. We consider a linear subspace $\mu^\ast(\tau_p)$ of $\nu^\ast_p := \mathsf{Span}_{\A}\{d\tilde g_1|_p, \ldots , d\tilde g_r|_p\}$ which consists of $1$-forms that vanish at the given vector space $\tau_p \supset T_p W$. Let $d\tilde h_1, \ldots , d\tilde h_{n-\delta}$ be a basis of the vector space $\mu^\ast(\tau_p)$. Then each $d\tilde h_j = \sum_k a_{jk}\, d\tilde g_k$ for some $a_{jk} \in \A$. Consider the polynomials $\{\tilde h_j := \sum_k a_{jk}\, \tilde g_k\}_{j \in [1, n-\delta]}$. Denote by $\{h_j\}_{j \in [1, n-\delta]}$ their homogeneous representatives. Then we define $V$ by $n-\delta$ homogeneous equations $\{h_j = 0\}_{j \in [1, n-\delta]}$, where $\{h_j\}$ are linearly independent elements in the $d$-graded portion of $\A[x_0, \ldots , x_n]$. By its construction, $V$ has $\tau_p$ for its tangent space at $p$.  So its dimension is $\delta$. Therefore $V$ is a complete intersection which is smooth at $p$.\smallskip

The same kind of arguments, based only on linear algebra, validates the claim that $V$ is unique among complete intersections of the multi-degree  $(d, \ldots , d)$ and with $\tau_p$ for a tangent space at some point $p \in W$. 
\end{proof}

By taking the node set $\mathsf N$ for the role of the complete intersection $W$ in Proposition \ref{TH_B} and applying Theorem \ref{smooth}, we get instantly the following claim.

\begin{corollary}\label{unique_hyper} Consider a $d^{\{n\}}$-cage $\mathsf K \subset \P^n$ and a vector subspace $\tau_p$ of dimension $n-s$ in the tangent space $T_p(\P^n)$, where $p$ is a node of $\mathsf K$. Then there exists a unique complete intersection $V \subset \P^n$ of the multi-degree $(\underbrace{d, \dots , d}_{s})$ and of dimension $n-s$ that contains all the nodes of  $\mathsf K$ and whose tangent space $T_p(V) = \tau_p$. 
 \smallskip

As a result, any supra-simplicial node set $\mathsf A \subset \mathsf K$ and a $(n-s)$-dimensional subspace  $\tau_p \subset T_p(\P^n)$,\footnote{equivalently, a point in the Grassmanian $\mathsf{Gr}_\A(n, n-s)$} where $p \in \mathsf N$, determines such a  variety $V$ and the \emph{distribution} of $(n-s)$-subspaces $\tau_V$ in $T(\P^n)|_{\mathsf N}$ it produces. So the cage $\mathsf K$, with the help of the inscribed $V$'s, defines canonically a ``diagonal" embedding of Grassmanians  $$\Delta_\mathsf K:\; \mathsf{Gr}_\A\big(T_p(\P^n),\, n-s\big) \longrightarrow \prod_{q \in \mathsf N \setminus p} \mathsf{Gr}_\A\big(T_q(\P^n),\, n-s\big).$$
\hfill $\diamondsuit$
\end{corollary}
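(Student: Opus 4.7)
The plan is to reduce the corollary to Proposition \ref{TH_B} by identifying its abstract complete intersection $W$ with the node set $\mathsf N$ of the cage $\mathsf K$. First I would verify that $\mathsf N$ is a smooth $0$-dimensional complete intersection of codimension $r = n$ cut out by the degree-$d$ polynomials $\mathcal L_1, \dots, \mathcal L_n$. The set-theoretic claim follows from B\'{e}zout and the defining property of the cage. For smoothness, at each node $p$ exactly one linear factor of each $\mathcal L_j$ vanishes, so $d\mathcal L_j|_p$ is a nonzero multiple of the linear form defining the unique color-$\a_j$ hyperplane through $p$; the transversality assumption of Definition \ref{cage} therefore translates directly into linear independence of $d\mathcal L_1|_p, \dots, d\mathcal L_n|_p$.

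Next I would feed this $W = \mathsf N$, the distinguished node $p$, and the prescribed $\tau_p$ of dimension $\delta = n - s$ into Proposition \ref{TH_B}. The hypothesis $\tau_p \supset T_p W$ is automatic since $T_p \mathsf N = 0$, and the inequality $\delta > n - r$ becomes the nondegenerate range $s < n$ (the borderline case $s = n$ forces $V = \mathsf N$ and $\tau_p = 0$, for which both existence and uniqueness are trivial). Proposition \ref{TH_B} then produces a \emph{unique} complete intersection $V$ cut out by $n - \delta = s$ hypersurfaces of degree $d$ with $V \supset \mathsf N$ and $T_p V = \tau_p$; its multi-degree is $(\underbrace{d, \dots, d}_{s})$ and its dimension is $n - s$. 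This delivers the first paragraph of the corollary verbatim.

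For the second paragraph, I would invoke Theorem \ref{main_th} (or equivalently Theorem \ref{smooth}) to upgrade the input datum from $\mathsf N$ to a supra-simplicial subset $\mathsf A \subset \mathsf N$: any variety defined by polynomials of degrees $\leq d$ that contains $\mathsf A$ automatically contains $\mathsf N$, so the $V$ associated to $(\mathsf A, \tau_p)$ coincides with the $V$ associated to $(\mathsf N, \tau_p)$. Then $\Delta_\mathsf K$ is defined by sending $\tau_p \in \mathsf{Gr}_\A(T_p(\P^n), n-s)$ to the tuple $(T_q V)_{q \in \mathsf N \setminus p}$; each $T_q V$ is a genuine $(n-s)$-plane because $V$ is smooth along $\mathsf N$ by the smoothness clause of Proposition \ref{TH_B} (or Theorem \ref{smooth}). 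Injectivity of $\Delta_\mathsf K$ — which is what the word ``embedding'' requires — is actually stronger than needed: applying the uniqueness statement of the first paragraph with any single $q \ne p$ in the role of $p$ shows that $V$, and hence $\tau_p$, is already recovered from any one coordinate $T_q V$ of the tuple.

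The main obstacle is conceptual bookkeeping rather than a genuinely hard step: I must unpack Definition \ref{cage} carefully enough to see that the node set satisfies the smoothness and complete-intersection hypotheses of Proposition \ref{TH_B}, and I must explicitly track the numerical range $s < n$ for which the construction is nondegenerate. The substance of the result is already carried by Proposition \ref{TH_B} and Theorem \ref{smooth}; the corollary is in essence their specialization to $W = \mathsf N$.
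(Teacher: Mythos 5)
Your proposal is correct and follows essentially the same route as the paper, which proves the corollary by taking the node set $\mathsf N$ for the role of $W$ in Proposition \ref{TH_B} and combining this with Theorem \ref{smooth}. You supply somewhat more detail than the paper does (verifying the hypotheses of Proposition \ref{TH_B} for $W = \mathsf N$, tracking the range $s < n$, and checking injectivity of $\Delta_{\mathsf K}$), but the substance is identical.
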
 

We may give a slightly different interpretation to Corollary \ref{unique_hyper} by viewing the Grassmanian $\mathsf{Gr}_\A(n, n-s)$ as a moduli space of varieties in a given $d^{\{n\}}$-cage $\mathsf K \subset \P^n$.  Consider a subvariety $\mathcal E(\mathsf K, p)$ of $\P^n \times \mathsf{Gr}_\A(n, n-s)$
which depends on the cage $\mathsf K \subset \P^n$ and its preferred node $p$. By definition, $\mathcal E(\mathsf K, p) = \{(x, \tau) | \; x \in V(\tau),\; \tau \in \mathsf{Gr}_\A(n, n-s)\}$, where $V(\tau) \subset \P^n$ is the complete intersection  of dimension $n-s$ and multi-degree $(\underbrace{d, \dots , d}_{s})$ that contains all the nodes of $\mathsf K$ and has $\tau$ as its tangent space $T_p(V(\tau))$ at $p$. Then the obvious projection $\pi: \P^n \times \mathsf{Gr}_\A(n, n-s) \to \mathsf{Gr}_\A(n, n-s)$
 gives rise to a surjective regular map of algebraic varieties $\pi: \mathcal E(\mathsf K, p) \to \mathsf{Gr}_\A(n, n-s)$ whose fiber over a point $\tau \in \mathsf{Gr}_\A(n, n-s)$ is the complete intersection $V(\tau)$. 

The subvarieties of the Grassmanian  $\mathsf{Gr}_\A(n, n-s)$ that consist of points $\tau$ such that the fiber $\pi^{-1}(\tau)$ is \emph{singular} or \emph{reducible} (and thus singular) deserve a separate research.  
 \smallskip

The reader may be entertained by one special case of Corollary \ref{unique_hyper}, where the variety $V$ is a complete intersection of the multi-degree $(\underbrace{d, \dots , d}_{n-1})$, a curve with a given tangent line $\ell$ at one of the nodes of the cage to which $V$ is attached. In this ``Cage Croquet" game, the nodes represent the gates, and the curve represents the desired trajectory of the ball, passing through all the gates. One can aim in any direction $\ell$ from any gate, and the ``right trajectory" $V$ of the multi-degree $(\underbrace{d, \dots , d}_{n-1})$ will pass through all the gates.  \smallskip

Let us glance now at $3$-dimensional cages and the polyhedral surfaces that have their vertices among the nodes of these cages. \smallskip

A {\sf tricolored polyhedral surface}  $\Sigma \subset \R^3$ is a surface whose faces are flat polygons, colored with three colors. We say that a vertex $v$ of $\Sigma$ is {\sf trivalent} if exactly three distinctly colored faces join at $v$.  A tricolored polyhedral surface is {\sf trivalent} if all its vertices are. Finally, a {\sf perfect} trivalent polyhedral surface is a trivalent tricolored polyhedral surface with equal number of faces, colored with each of the three colors. A surface of a cube is an example of a perfect trivalent polyhedral surface. 

We notice that a generic perfect trivalent polyhedral surface with $3d$ faces determines a $d^{\{3\}}$-cage in the space. For example, a generic union of $k$ tricolored cubes in $\R^3$ is a perfect trivalent polyhedron that gives rise to a $(2k)^{\{3\}}$-cage in $\R^3$. 
\smallskip

In view of these observations, Corollary \ref{unique_hyper} leads to the following claim.

\begin{corollary} Let $\Sigma \subset \R^3$ be a perfect trivalent polyhedral surface with $3d$ faces that generates a $d^{\{3\}}$-cage $\mathsf K_\Sigma$ in $\R^3$. Given a plane $\tau$ through one of vertices $v \in \Sigma$, there exists a unique affine algebraic surface $S \subset \R^3$ of degree $d$ such that: 
\begin{itemize}
\item all the verticies of $\Sigma$ lie on $S$ (i.e., $\Sigma$ is inscribed in $S$), 

\item  $S$ contains all the nodes of the $d^{\{3\}}$-cage $\mathsf K_\Sigma$, 

\item  $S$ is tangent to the plane $\tau$ at $v$ and is smooth in the vicinity of all verticies of $\Sigma$.  

\hfill $\diamondsuit$
\end{itemize}
\end{corollary}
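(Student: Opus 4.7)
The plan is to reduce the statement directly to Corollary \ref{unique_hyper} with $n = 3$, $s = 1$, and base field $\A = \R$. The first step is a purely combinatorial observation: every vertex of $\Sigma$ is a node of $\mathsf K_\Sigma$. Indeed, since $\Sigma$ is perfect and trivalent, exactly three distinctly colored faces meet at each vertex $v$; the supporting planes of those faces are three distinctly colored members of $\mathsf K_\Sigma$, and Definition \ref{cage} forces their intersection to be a single node, which must then equal $v$.

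Next I would projectivize: embed $\R^3 \hookrightarrow \R\P^3$ in the standard way so that the hyperplanes of $\mathsf K_\Sigma$ extend to a $d^{\{3\}}$-cage in $\R\P^3$, whose affine nodes agree with the nodes of $\mathsf K_\Sigma$. The given plane $\tau$ through $v$ supplies a $2$-dimensional subspace $\tau_v \subset T_v(\R\P^3)$. Corollary \ref{unique_hyper}, applied with these parameters, produces a unique complete intersection $V \subset \R\P^3$ of multi-degree $(d)$ --- necessarily a single hypersurface of degree $d$ --- that contains every node of the cage and satisfies $T_vV = \tau_v$. Dehomogenizing to the affine chart $\R^3$ yields the desired surface $S$ of degree $d$. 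The three bullets then follow at once: $S$ contains every vertex of $\Sigma$ because it contains every node of $\mathsf K_\Sigma$; $S$ is tangent to $\tau$ at $v$ by construction; and the smoothness of $S$ at every node, and so at every vertex of $\Sigma$, is precisely the content of Theorem \ref{smooth}. Uniqueness is inherited from the uniqueness clause of Corollary \ref{unique_hyper}.

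The main obstacle I foresee is the legitimacy of invoking Corollary \ref{unique_hyper} over $\R$ rather than $\C$, a point the paper flags in the remark after Corollary \ref{not_complete}. The resolution is that the polynomial $\mathcal P_{\mathsf K, \vec\l}$ built in the proof of Theorem \ref{main_th} already lives in $\R[y_0, \ldots, y_3]$ whenever the defining linear forms $L_{j,i}$ and the coefficients $\vec\l$ are real, and the linear-algebra arguments underlying Propositions \ref{TH_A} and \ref{TH_B} go through verbatim for any infinite field of characteristic zero. A secondary point worth verifying is that the prescribed tangent plane $\tau_v$ cannot force $V$ to acquire a cage plane as a component; but any such degeneration would contradict the smoothness conclusion of Theorem \ref{smooth} at the nodes lying off that plane, so it cannot occur.
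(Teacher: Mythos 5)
Your argument follows the paper's intended route exactly: the corollary is stated there as a direct consequence of Corollary \ref{unique_hyper} with $n=3$, $s=1$, resting on the observation that the vertices of a perfect trivalent polyhedral surface are nodes of the cage it generates. The extra care you take about projectivizing, dehomogenizing, and the legitimacy of working over $\R$ fills in details the paper leaves implicit (and which it itself only partially addresses in the remark after Corollary \ref{not_complete}), so the proposal is correct and essentially identical in approach.
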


\noindent {\bf Example 2.10.}
The surface $\Sigma$ of a surface of a tricolored cube with three quadrangular wormholes that connect pairs of similarly colored opposite faces ($\Sigma$ is a surface of genus $3$) has  $18 = 6 + 3\times 4$ faces (6 of which are not simply-connected polygons). It can be inscribed in an algebraic surface $S$ of degree $6 = 18/3$. In addition to the 32 vertices of $\Sigma$, lying on $S$, the rest of the nodes (numbering 184) of the $6^{\{3\}}$-cage $\mathsf K_\Sigma$ also belongs to $S$. Such a surface $S$ with a prescribed tangent plane $\tau$ at one vertex of $\Sigma$ is unique.  \hfill $\diamondsuit$
\bigskip

{\it Acknowledgments: } The earlier version of this paper contained a generalization  of Theorem \ref{main_th} for cages on complex projective varieties. My proof of this generalization contained a basic irreparable mistake. I am very grateful to the reviewer who has found the mistake and came with valuable suggestions for improving the entire presentation.

\end{document}